%%%%%%%%%%%%%%%%%%%%%%%%%%%%%%%%%%%%%%%%%%%%%%%%%%%%%%%%%%%%%%%%%
%%%%%%%%%%%%%%%%%%%%%%%%%%%%%%%%%%%%%%%%%%%%%%%%%%%%%%%%%%%%%%%%%
\overfullrule=0pt
\documentclass[11pt]{amsart}
\usepackage{amscd, amsmath, amsthm, amssymb, xy, xypic, dsfont}
\usepackage{graphicx}

%\usepackage{amsmath,amsthm,amsfonts,eucal,epsfig, multirow}
%%%%%%%%%%%%%%%%%%%%%%%%%%%%%%%%%%%%%%%%%%%%%%%%%%%%%%%%%%%%%%%%%
\newtheorem{theorem}{Theorem}[section]
\newtheorem{lemma}[theorem]{Lemma}
\newtheorem{proposition}[theorem]{Proposition}

\newtheorem{question}[theorem]{Question}

\newtheorem{corollary}[theorem]{Corollary}

\theoremstyle{definition}     % italic or bold etc.
\newtheorem{definition}[theorem]{Definition}

\newtheorem{example}[theorem]{Example}

\newtheorem{remark}[theorem]{Remark}
\numberwithin{equation}{section}

\def \hd #1 {\bfseries #1  \mdseries}
\def \italic #1 {\bfseries \it #1 \rm \mdseries}
\def \ra {\rightarrow}
\def \cen #1 { \begin{center} #1 \end{center}}

\def \mbz {\mathbb Z}

\def \mbr {\mathbb R}
\def \mbc {\mathbb C}
\def \mbp {\mathbb P}

\def \mbq  {\mathbb {Q}}
\def \mco  {\mathcal {O}}

\def \mcX {\mathcal {X}}
\def \mcY {\mathcal {Y}}
\def \mcZ {\mathcal {Z}}

\def \Pic {{\rm{Pic}}}
\def \rk {{\,\,\rm{rk}}}
\def \im {{\rm{im}}}

\def \vol {{\rm{vol}}}

\def\K3{\mathcal{K}}

%%%%%%%%%%%%%%%%%%%%%%%%%%%%%%%%%%%%%%%%%%%%%%%%%%%%%%%%%%%%%%%
\begin{document}
\title [ Mirror pairs of Calabi--Yau threefolds]
 {Mirror pairs of Calabi--Yau threefolds from mirror pairs of quasi-Fano threefolds}

\author{Nam-Hoon Lee}
\address{
Department of Mathematics Education, Hongik University
42-1, Sangsu-Dong, Mapo-Gu, Seoul 121-791, Korea}
\address{
School of Mathematics, Korea Institute for Advanced Study, Dongdaemun-gu, Seoul 130-722, South Korea }
\email{nhlee@kias.re.kr}
%\thanks{Partially supported by NSF grant DMS-0093542}
%\thanks{The author is grateful for several valuable comments from the referee.}
\subjclass[2010]{14J32, 14J33, 14D06 }
\keywords{Calabi--Yau threefold, Smoothing, Mirror symmetry, Landau--Ginzburg model, Quasi-Fano manifold}
\begin{abstract}
We present a new construction of  mirror pairs of Calabi--Yau manifolds by smoothing normal crossing varieties, consisting of two quasi-Fano manifolds. We introduce a notion of mirror pairs of quasi-Fano manifolds with anticanonical Calabi--Yau fibrations using recent conjectures about Landau--Ginzburg models.
Utilizing this notion, we give  pairs of normal crossing varieties and show that the pairs of smoothed Calabi--Yau manifolds satisfy the Hodge number relations of mirror symmetry.
We consider quasi-Fano threefolds that are some blow-ups of Gorenstein toric Fano threefolds and build 6518 mirror pairs of Calabi--Yau threefolds, including 79 self-mirrors.
\end{abstract}
\maketitle
%%%%%%%%%%%%%%%%%%%%%%%%%%%%%%%%%%%%%%%%%%%
% \tableofcontents
% \setcounter{section}{-1}
\section{Introduction}
A \emph{Calabi--Yau manifold} is a compact K\"ahler manifold with trivial canonical
class such that the intermediate cohomologies of its structure sheaf are
all trivial ($h^i(M,\mco_M) = 0$ for $0 < i < \dim(M)$). A $K3$ surface is a Calabi--Yau twofold in this definition.
Calabi--Yau manifolds have special places in the classification of algebraic varieties and they are also among important manifolds that have special holonomy.

To the eye-opening surprise of mathematicians,
Calabi--Yau threefolds happen to be compact factors of the spacetime on which physicists are building their physics theory. They  have been investigating Calabi--Yau threefolds in their own way. They found that different Calabi--Yau threefolds may give rise to the same physics. Those manifolds are called mirror manifolds and the relationship between them is called mirror symmetry. A mirror pair $(M, M^\circ)$ of Calabi--Yau threefolds  is supposed to satisfy
\begin{align}\label{mirrorrel}
 h^{1,1}(M) = h^{1,2}(M^\circ), \,\, h^{1,2}(M) = h^{1,1}(M^\circ).
\end{align}
One can say that the mirror symmetry for Calabi--Yau threefolds from physics has   impacts in the geometry of Calabi--Yau threefolds as follows.
\begin{itemize}
\item Seemingly different two Calabi--Yau threefolds may be  deeply related.
An enumerative problem on one can be translated into another enumerative problem on another which sometimes  is simpler than the original one (\cite{Gi, LLY}).
\item The mirror symmetry expects that Calabi--Yau threefolds exist as pairs. Nowadays it is not an unreasonable question to ask  what  a mirror partner for a certain Calabi--Yau threefold is.
\end{itemize}

Physicists constructed many Calabi--Yau threefolds as hypersurfaces in weight projective spaces, which generate an almost symmetric plot of \mbox{$h^{1,1}-h^{1,2}$} vs.\ \mbox{$h^{1,1}+h^{1,2}$} (\cite{CaLySc, KlSh}).
% but many mathematicians were still skeptical.
 V.\ Batyrev generalized the construction and gave completely symmetric mirror construction of Calabi--Yau threefolds as hypersurfaces in Gorenstein toric Fano fourfolds, using the polar duality of reflexive 4-polytopes and proving the Hodge number relation (\ref{mirrorrel}) (\cite{Ba}). This construction was generalized further (\cite{BaBo, Bor}) and  has inspired many researches from both of mathematics and physics.

In this paper, we suggest another systematic construction of mirror pairs of Calabi--Yau threefolds by using the smoothing method.
By smoothing, we mean the reverse process of the semistable degeneration of a manifold to a normal crossing variety.
If a normal crossing variety is the central fiber of a semistable degeneration
of Calabi--Yau manifolds, it can be regarded as a member in a deformation
family of those Calabi--Yau manifolds.
A remarkable difference
between two-dimensional cases of $K3$ surfaces and higher dimensional
cases is that there are multiple deformation types for higher dimensional
Calabi--Yau manifolds. So building a normal crossing variety smoothable to
a Calabi--Yau manifold can be regarded as building a deformation type of
Calabi--Yau manifolds. The construction by smoothing is intrinsically up to deformation.

We consider the simplest case of smoothing where the normal crossing variety is composed of two manifolds. Those two component manifolds will be called quasi-Fano manifolds.
We further assume that the anticanonical linear systems of those quasi-Fano manifolds induce fibrations whose generic fibers are Calabi--Yau manifolds of codimension one.
Even in this simplest case, building the mirror pairs of the smoothing of a normal crossing variety is a challenging problem.
A.\ Tyurin only vaguely suggested that the mirror pair of a smoothing should come from the Landau--Ginzburg models of  components of the normal crossing  variety in the very last part of his posthumous paper (\cite{Ty}).

Shortly after mirror symmetry was  formulated as a duality between Calabi--Yau manifolds, it was
 suggested that Fano manifolds
also may exhibit mirror symmetry.
In this case,  the mirror of a  Fano manifold is not a compact manifold,
but rather a Landau--Ginzburg model, a non-compact manifold equipped  with a regular function called superpotential.
Recently up comes  an interesting conjecture, claiming that the mirror of the Calabi--Yau  smoothing of a normal crossing variety may be topologically obtained by gluing Landau--Ginzburg models of components of the normal crossing variety (\cite{DoHaTh}).
Also in \cite{KaKoPa}, a conjectural Hodge number relation between a variety and its Landau--Ginzburg model has been suggested.
With these as hints, we  try to construct mirror partners of  smoothings of normal crossing varieties.
Our key idea is the realization that we may regard a quasi-Fano manifold with anticanonical fibration as a compactification of a Landau--Ginzburg model of another quasi-Fano manifold.

Roughly speaking, we regard a pair of quasi-Fano manifolds with anticanonical Calabi--Yau fibrations as a mirror pair, if each of them is a compactification of the Landau--Ginzburg model of the other.
After establishing this notion, we define mirror pairs of normal crossing varieties, smoothable to Calabi--Yau threefolds and show that those Calabi--Yau threefolds satisfy the relation (\ref{mirrorrel}).
It turns out that there is a deep connection between mirror symmetry of quasi-Fano threefolds and mirror symmetry of $K3$ surfaces.
We consider quasi-Fano threefolds that are some blow-ups of Gorenstein toric Fano threefolds and build 6518 mirror pairs of Calabi--Yau threefolds, including 79 self-mirrors by smoothing.
One can find  tables for them  in \cite{Lee2}.

The structure of this paper is as follows.

Section \ref{sec2-1} is a background section for the smoothing method. We introduce basic definitions of quasi-Fano manifolds and the smoothing theorem of
Kawamata--Namikawa, which is the main tool of the construction of Calabi--Yau
manifolds in this paper.

We start  Section \ref{sec2-2} by recalling some basic notions and definitions about reflexive polytopes. We  construct a Calabi--Yau threefold  from a quasi-Fano threefold  that is a blow-up of a Gorenstein toric Fano threefold.  This Gorenstein toric Fano threefold comes from a reflexive 3-polytope.

In Section \ref{sec3-1}, we start our journey to find a  mirror partner of the Calabi--Yau threefold constructed in Section \ref{sec2-2} under the guidance of  the conjectures regarding Landau--Ginzburg models. We  make some elementary observations about  quasi-Fano manifolds and their Landau--Ginzburg models.

In Section \ref{sec3-2}, using hints from Section \ref{sec3-1}, we  build a Calabi--Yau threefold  by smoothing a normal crossing variety. The components of  the normal crossing variety are obtained   by  sequentially blowing up another Gorenstein toric Fano threefold that comes from the polar dual of the previous reflexive 3-polytope.
We prove that these Calabi--Yau threefolds   satisfy the relation (\ref{mirrorrel}).
Each of the equivalence classes of reflexive 3-polytopes gives a mirror pair of Calabi--Yau threefolds. Hence we  obtain a big list of mirror pairs of Calabi--Yau threefolds, which are summarized in Table 1 of \cite{Lee2}.

In Section \ref{sec4}, motivated by our success in the previous sections, we  define a notion of mirror pairs of quasi-Fano threefolds. This definition utilizes the conjectural Hodge number relations in \cite{KaKoPa} and the mirror symmetry of $K3$ surfaces in \cite{Do}.
Then each of equivalence classes of reflexive 3-polytopes gives a mirror pair of quasi-Fano threefolds, which are also listed in Table 1 of \cite{Lee2}. Those will be building blocks for the construction of more mirror pairs of Calabi--Yau threefolds in Section \ref{sec5}.
 We also introduce  other examples of mirror pairs of quasi-Fano threefolds which come from non-symplectic involutions on $K3$ surfaces.

We start Section \ref{sec5} by introducing a notion of mirror pairs of normal crossing varieties smoothable to Calabi--Yau threefolds. We  prove Theorem \ref{3dimmirrort}, claiming that the expected relation (\ref{mirrorrel})  hold for the smoothing of those pairs. Combining mirror pairs of quasi-Fano threefolds we obtained before, we give another large table of mirror pairs of Calabi--Yau threefolds, including 79 self-mirrors.
Those are listed in Table 2 of \cite{Lee2}.

Section \ref{sec6} is  about  whether our pairs of Calabi--Yau threefolds are new.  We pick up a particular example of Calabi--Yau threefolds constructed in the previous sections and show that it is not homeomorphic to any of the Calabi--Yau threefolds that are desingularizations of anticanonical sections of  Gorenstein toric Fano fourfolds (\cite{Ba, KrSk2}).

 Section \ref{sec7} is devoted to some discussion on the higher dimensional generalization of notions from the previous sections.
We suggest a definition of a mirror pair of higher dimensional quasi-Fano manifolds and prove a topological mirror relation.

In Section \ref{sec8}, we discuss quasi-Fano manifolds with anticanonical fibrations that do not have quasi-Fano manifolds as their mirror partners.

Rigorously speaking, the pairs of Calabi--Yau manifolds constructed in this paper are  conjectural  because we only check the Hodge number relations of mirror symmetry.
However, seeing that those ingredients from Landau--Ginzburg models, mirror symmetry of $K3$ surfaces and mirror symmetry of Calabi--Yau manifolds are merged very naturally to produce expected results, we expect  that they are genuine mirror pairs.

\section{Preliminaries}\label{sec2-1}

 By a variety, we mean a reduced complex analytic space.
We start with defining basic terminologies.
\begin{definition}
A \emph{quasi-Fano manifold} $X$ is a smooth projective variety whose anticanonical linear system $|-K_X|$ contains a Calabi--Yau manifold and
$$h^i(X, \mco_X) =0$$
for $i>0$.
\end{definition}
We denoted the Calabi--Yau manifold by $D_X$. If a generic element of $|-K_X|$ is smooth, then $D_X$ will be referred to one of those generic ones.

Let
$$\Pic_X(D_X) = i^*(\Pic(X)) \subset \Pic(D_X)$$
and $\alpha_X = \rk \Pic_X(D_X)$ be the rank of the group  $\Pic_X(D_X)$,
where $i:D_X \hookrightarrow X$ is the inclusion map.
Note that $\Pic_X(D_X)$ is a subgroup of  $H^2(D_X, \mbz)$.
If the normal bundle $N_{D_X/X}$ to $D_X$ in $X$ is trivial, then the anticanonical linear system $|D_X|$ induces a fibration (to be called  \emph{anticanonical fibration})
\begin{align}
 \overline W_X:X \ra \mbp^1 \label{refree1}
\end{align}
with $\overline W_X^{-1}(\infty) =D_X$ and $X$ is said to have an \emph{anticanonical Calabi--Yau fibration}.

Let $\mathcal X =X_1 \cup X_2$ be a variety whose irreducible components are two smooth varieties $X_1$ and $X_2$.
$\mathcal X$ is called a \emph{normal crossing variety} if, near any point $p \in X_1 \cap X_2$, $\mathcal X$ is locally isomorphic to
$$\{(x_0,x_1,\cdots, x_n) \in \mbc^{n+1} | x_{n-1} x_n = 0 \}$$
 with $p$ corresponding to the origin and $X_1$, $X_2$ locally corresponding to the hypersurfaces $x_{n-1}=0,  x_n = 0$ respectively in $\mbc^{n+1}$.
 Note that the variety $D_\mcX:=X_1 \cap X_2$ is  smooth.
Suppose that there is  a proper map  $\psi : \mathfrak{X} \ra B$ from a K\"ahler
manifold ${\mathfrak X}$ onto the unit disk $B=\{t \in \mbc | \| t\| \leq 1 \}$ such that
the fiber ${\mathfrak X}_t = \psi^{-1}(t)$ is a smooth manifold for every $t \neq 0$ and ${\mathfrak X}_0 = \mathcal X$.
We say that $\mathcal X$ is a semistable degeneration of a smooth threefold $M={\mathfrak X}_t$ ( $t \neq 0$) and that $M$ is a \emph{semistable smoothing} (simply smoothing) of $\mcX$ ( \cite{KaNa}).

Consider a normal crossing variety $\mathcal X =X_1 \cup X_2$ of quasi-Fano manifolds $X_1, X_2$ such that $D_\mcX = X_1 \cap X_2$ is an anticanonical section of both $X_1, X_2$.
If $\mathcal X$ is projective and the bundle
$$N_{D_\mcX /X_1} \otimes N_{D_\mcX /X_2} $$
 on $D_\mcX$ is trivial (called $d$-semistability), then $\mcX$ is smoothable to a Calabi--Yau manifold (will be denoted by $M_\mcX$) (Theorem 4.2, \cite{KaNa}).

 For dimension three, the Hodge numbers of $M_\mcX$ are given by (Corollary 8.2, \cite{Lee}):

 \begin{equation}
\label{hodgenumberq}
 \left \{\, \begin{aligned}
       h^{1, 1} (M_\mcX) &= h^{1,1} (X_1) + h^{1,1} (X_2) - \alpha_\mcX -1,\\
         h^{1, 2} (M_\mcX)&= h^{2,1} (M_\mcX)= 21 + h^{1, 2} (X_1) + h^{1, 2} (X_2) - \alpha_\mcX,
       \end{aligned}
 \right.
\end{equation}
where
$$\alpha_\mcX = \rk \left ( \Pic_{X_1}(D_\mcX) + \Pic_{X_2}(D_\mcX) \right).$$
Let $X_i$ be a quasi-Fano manifold with a smooth anticanonical section $D_{X_i}$ for each $i=1, 2$. If $D_{X_1}$ and $D_{X_2}$ are isomorphic, one can make a $d$-semistable normal crossing variety  $X_1 \cup_D X_2$  by gluing along $D_{X_1}$ and $D_{X_2}$, where `$\cup_D$' means gluing $X_1, X_2$ along $D_{X_1}$, $D_{X_2}$ (see \S 2 (Corollary 2.4) of \cite{KeTa} for details for the gluing process).

Let $X$ be a quasi-Fano manifold such that  the normal bundle $N_{D_X/X} $ to $D_X$ in $X$ is trivial. Then we have  the anticanonical fibration  (\ref{refree1})
$$ \overline W_X:X \ra \mbp^1$$
with $\overline W_X^{-1}(\infty) =D_X$.
 Let $X_1, X_2$ be copies of $X$. We denote the copy in $X_i$ of $D_X$ by $D_{X_i}$.
 We make a normal crossing variety $\mcX = X_1 \cup_D X_2$ (also to be denoted by $X \cup_D X$).
 It is easy to see that $\mcX$ is projective and $d$-semistable. Hence, by Theorem 4.2 in \cite{KaNa}, it is smoothable to a Calabi--Yau manifold $M_\mcX$. Since the manifold $M_\mcX$ is determined by $X$ up to deformation, we will denote it by $\Xi_X$.
 In fact, one can construct $\Xi_X$ as a branched double cover over $X$, branched along $S=\overline W_X^{-1}(\{0, \infty\})$ but we will keep this point of view of smoothing for the time being and  we will come back to this point later in Section  \ref{sec7}.

\section{The Calabi--Yau threefold $\Xi_{X_\Delta}$} \label{sec2-2}

We recall some notations from toric geometry.
%( see, for example, \cite{ } for a reference).
%We will use results from \cite { } freely in this section. We follow notation of \cite{Ba} EXCEPT xxxx.
An integral polytope $\Delta$ in $\mbr^n$ is a convex hull of finitely many integral points (points with integer coordinates).
If, for integral polytopes $\Delta_1, \Delta_2$, there is a $\mbz^n$-preserving
affine transformation $\sigma$ satisfying $\Delta_2 =  \sigma(\Delta_1)$,  then $\Delta_1$, $\Delta_2$ are said to be equivalent.
For a set $A \subset \mbr^n$, its polar dual $A^\circ$ is defined by
$$A^\circ=\{u \in \mbr^n |  u \cdot v \leq -1 \text{ for any } v \in A  \},$$
where `$.$' is the standard inner product in $\mbr^n$.
An integral polytope $\Delta \subset \mbr^n$ is called a reflexive $n$-polytope if $(0,\cdots, 0)$
is in the interior of $\Delta$ and its polar dual $\Delta^\circ$ is also a lattice polytope.
%We denote $\Delta^\vee$ by $\Delta^\circ$.
For a face $\Gamma$ of $\Delta$, $l(\Gamma)$ and $l^*(\Gamma)$  are the numbers of integral points in $\Gamma$ and   in the relative interior of $\Gamma$ respectively.
We let $\Delta[k]$ be the set of $k$-dimensional faces of $\Delta$.
For each $\Gamma \in \Delta[k]$, let $\Gamma^\circ$ be the dual $(n-k-1)$-dimensional face of $\Delta^\circ$.

For a fan $\Sigma$ in $\mbr^n$, we denote by $X(\Sigma)$ the associated toric variety and let $\Sigma[1]$ be the set of primitive ray generators of
 $\Sigma$. Hence $\Sigma[1]$ is a set of integral points.

For a reflexive polytope $\Delta$, we denote by $\mbp(\Delta)$ the toric variety  that is associated with the fan consisting of cones over all the proper faces over $\Delta$ --- \emph{this is different from notations in \cite{Ba}}. It is known that $\mbp(\Delta)$ is a Gorenstein toric Fano variety.
Fix a fan $\Sigma_\Delta$, consisting of cones over simplices in $\partial \Delta$ in a maximal
coherent triangulation of $\Delta$.  A maximal
coherent triangulation is defined and proved to exist in \cite{GeKaZe}.
Note $\Sigma_\Delta[1] = \partial \Delta \cap \mbz^n$.
 The toric variety $X(\Sigma_\Delta)$, which is projective, is called a maximal partial projective crepant  desingularization of $\mbp({\Delta})$ (\cite{Ba}).

For a fan $\Sigma$, a reflexive polytope $\Delta$ and a quasi-Fano manifold $X$,
we summarize our notations, including ones to be defined:
\begin{itemize}
\item $X(\Sigma)$ is the associated toric variety of $\Sigma$.
\item $\Sigma[1]$  is the set of primitive ray generators of
the fan $\Sigma$.
\item $\Delta[k]$  is the set of $k$-dimensional faces of $\Delta$.
%\item $\vol(\Gamma)$ ($\Gamma \in \Delta[k]$) is the normalized volume of $\Gamma$ with respect to the lattice xxxx.
\item For a face $\Gamma$ of a polytope, $l(\Gamma) = | \Gamma \cap \mbz^n|$ is the number of integral points in $\Gamma$.
\item  $l^*(\Gamma)$ is the number of integral points in the relative interior of $\Gamma$.
\item $\mbp(\Delta)$ is the Gorenstein toric Fano variety that is associated with the fan consisting of cones over all the proper faces over $\Delta$.
\item $\Sigma_\Delta$ is the fan consisting of cones over a maximal projective triangulation of $\Delta$. $\Sigma_\Delta[1] = \partial \Delta \cap \mbz^n$.
\item $X(\Sigma_\Delta)$ is the toric variety associated with the fan $\Sigma_\Delta$. It is
 a maximal partial projective crepant  desingularization of $\mbp({\Delta})$.
\item $D_X$  is a (generic) smooth anticanonical section of $X$.
\item $\Pic_X(D_X) = i^* ( \Pic(X))$, where $i: D_X \hookrightarrow X$ is the inclusion.
\item $\alpha_X = \rk \Pic_X(D_X)$.
\item $X_\Delta$ is the blow-up of $X(\Sigma_{\Delta})$ along a smooth curve $c \in \left |-K_{X(\Sigma_{\Delta})}|_{D_{X(\Sigma_{\Delta})}} \right |$.
\item $Y_\Delta$ is a sequential blow-up of $X(\Sigma_{\Delta})$ along the smooth irreducible curves $c_1, c_2, \cdots, \cdots, c_k$ (p.\ \pageref{ydelta}).
\item $X_1 \cup_D X_2$ is the normal crossing variety of quasi-Fano manifolds $X_1, X_2$, made by gluing along their isomorphic smooth anticanonical sections.
\item $M_{\mathcal X}$ is a smoothing of a normal crossing variety $\mathcal X = X_1 \cup X_2$.
\item $X\cup_D X$ is a normal crossing variety, made by gluing two copies of $X$ along the two copies of $D_X$.
\item $\Xi_X = M_{\mcX}$ for $\mcX= X \cup_D X$.
\item $ W_X:X^* \ra \mbc$ is a Landau--Ginzburg model from an anticanonical fibration $\overline W_X : X \ra \mbp^1$, where $X^* = X - D_X$, $ W_X = \overline W_X|_{X^*}$  (p.\ \pageref{lgm}).
\item $\mcZ_\Delta = X_\Delta \cup_D Y_\Delta$ (p.\ \pageref{zdelta}).
\end{itemize}

For dimension three,  there are 4319 equivalence classes of reflexive 3-polytopes (\cite{KrSk}) and  a
maximal partial projective crepant desingularization $X(\Sigma_\Delta )$ of $\mbp(\Delta)$ is always smooth (\cite{Ba}).
The following lemma will be used several times.
\begin{lemma} \label{reflexeqn} For a reflexive 3-polytope $\Delta$,
$$3! \vol(\Delta)=2l(\Delta)-6$$
and
$$l(\Delta) +  l({\Delta^\circ}) + \sum_{\theta \in \Delta[1] } l^*(\theta) l^*(\theta^\circ) -  \sum_{v \in \Delta[0]} l^*(v^\circ) -\sum_{\Gamma \in \Delta[2]} l^*(\Gamma) = 28.$$
\end{lemma}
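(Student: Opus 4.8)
The plan is to prove the two identities separately. The first, $3!\vol(\Delta) = 2l(\Delta)-6$, I would obtain from Ehrhart theory. Writing the Ehrhart polynomial as $L_\Delta(t) = |t\Delta\cap\mbz^3| = \vol(\Delta)\,t^3 + at^2 + bt + 1$ (the constant term is $L_\Delta(0) = 1$), reflexivity of $\Delta$ is equivalent to the identity $L_\Delta^{\mathrm{int}}(t) = L_\Delta(t-1)$ for the interior Ehrhart polynomial, while Ehrhart--Macdonald reciprocity gives $L_\Delta^{\mathrm{int}}(t) = -L_\Delta(-t)$ in dimension three; hence $L_\Delta(t) = -L_\Delta(-t-1)$, and matching coefficients forces $a = \tfrac32\vol(\Delta)$ and $b = 2 + \tfrac12\vol(\Delta)$, so that $l(\Delta) = L_\Delta(1) = 3\vol(\Delta)+3$, which rearranges to the claim. (Equivalently, one may run Riemann--Roch on a $K3$: a generic anticanonical member $S$ of the crepant toric resolution $X(\Sigma_{\Delta^\circ})$ of $\mbp(\Delta^\circ)$ is a smooth $K3$ surface with $h^0(S,-K|_S) = l(\Delta)-1$ by the restriction sequence and toric vanishing, while $h^0(S,-K|_S) = 2 + \tfrac12(-K)^3|_S = 2 + \tfrac12\cdot 3!\vol(\Delta)$ by Riemann--Roch, giving the same conclusion.)

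For the second identity I would first reduce it to a statement about lattice lengths of edges. Split the lattice points of $\Delta$, and separately those of $\Delta^\circ$, according to the relative interior of the face containing them, using that the origin is the unique interior lattice point of each; then feed in the incidence-reversing face correspondence $\Gamma\leftrightarrow\Gamma^\circ$, which matches $\Delta[0]\leftrightarrow\Delta^\circ[2]$, $\Delta[1]\leftrightarrow\Delta^\circ[1]$, $\Delta[2]\leftrightarrow\Delta^\circ[0]$, to rewrite $l(\Delta)+l(\Delta^\circ)$ entirely in terms of data of $\Delta$. Substituting into the left-hand side of the asserted identity, the terms $\sum_{v} l^*(v^\circ)$ and $\sum_{\Gamma} l^*(\Gamma)$ cancel, and the edge contributions assemble into $\sum_{\theta\in\Delta[1]}(l^*(\theta)+1)(l^*(\theta^\circ)+1)$; using Euler's formula $|\Delta[0]| - |\Delta[1]| + |\Delta[2]| = 2$ for the $2$-sphere $\partial\Delta$, the left-hand side becomes $4 + \sum_{\theta\in\Delta[1]}\ell(\theta)\,\ell(\theta^\circ)$, where $\ell(\theta) = l^*(\theta)+1$ denotes the lattice length of $\theta$. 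So the lemma is equivalent to $\sum_{\theta\in\Delta[1]}\ell(\theta)\,\ell(\theta^\circ) = 24$.

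This last identity I would extract from the fact that a $K3$ surface has topological Euler number $24$. Let $\hat X := X(\Sigma_\Delta)$, a smooth projective toric $3$-fold, and let $\tilde S\in|-K_{\hat X}|$ be generic: it is a smooth $K3$ surface, so $\chi_{\mathrm{top}}(\tilde S) = 24$. The defining Laurent polynomial of $\tilde S$ has Newton polytope $\Delta^\circ$, and for generic $\tilde S$ its restriction to each torus orbit $O_\sigma$ of $\hat X$ is nondegenerate; in particular $\tilde S$ meets no torus-fixed point. Stratify $\tilde S = \bigsqcup_\sigma(\tilde S\cap O_\sigma)$, use additivity of the Euler characteristic, and evaluate each term by Khovanskii's formula $\chi(\{g=0\}) = (-1)^{k-1}k!\,\vol(\mathrm{Newt}(g))$ for a nondegenerate Laurent polynomial $g$ on $(\mbc^*)^k$ (so $\chi = 0$ as soon as $\dim\mathrm{Newt}(g) < k$). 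Along a cone $\sigma$ the leading term of the section has Newton polytope the face $\Gamma^\circ$ of $\Delta^\circ$ dual to the smallest face $\Gamma$ of $\Delta$ whose spanning cone contains $\sigma$; running through the cones of $\Sigma_\Delta$ one then finds: the dense torus contributes $3!\vol(\Delta^\circ)$; a ray through a vertex $v$ of $\Delta$ contributes $-A(v^\circ)$, minus the normalized area of the facet $v^\circ$ of $\Delta^\circ$, while every other ray contributes $0$; among the $2$-cones, only those over one of the $\ell(\theta)$ unit subsegments of an edge $\theta$ of $\Delta$ contribute, each giving $\ell(\theta^\circ)$; and every $3$-cone contributes $0$. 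Summing gives
\[
24 = 3!\vol(\Delta^\circ) - \sum_{v\in\Delta[0]} A(v^\circ) + \sum_{\theta\in\Delta[1]}\ell(\theta)\,\ell(\theta^\circ),
\]
and decomposing $\Delta^\circ$ into the pyramids with apex $0$ over its facets $v^\circ$ — each at lattice distance $1$ from $0$ by reflexivity — gives $3!\vol(\Delta^\circ) = \sum_{v\in\Delta[0]} A(v^\circ)$, hence $\sum_{\theta\in\Delta[1]}\ell(\theta)\,\ell(\theta^\circ) = 24$.

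The main obstacle is the bookkeeping in this last step: pinning down the leading term of the generic anticanonical section along each cone, matching its Newton polytope with the correct face of $\Delta^\circ$ under polar duality, and tracking multiplicities so that precisely the ``edge-over-edge'' $2$-cones survive, together with the normalization $3!\vol(\Delta^\circ) = \sum_{v} A(v^\circ)$. Everything else in the argument is formal.
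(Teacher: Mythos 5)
Your proposal is correct, and its core combinatorial step coincides with the paper's: you split $l(\Delta)$ and $l(\Delta^\circ)$ according to open faces, use the incidence-reversing correspondence $\Gamma\leftrightarrow\Gamma^\circ$ together with Euler's formula $|\Delta[0]|-|\Delta[1]|+|\Delta[2]|=2$, and reduce the second identity to $\sum_{\theta\in\Delta[1]}(l^*(\theta)+1)(l^*(\theta^\circ)+1)=24$ --- this is precisely the paper's proof of that identity. The difference lies in how the two inputs are treated: the paper declares $3!\vol(\Delta)=2l(\Delta)-6$ well known and cites Kasprzyk's thesis for the ``24'' property, whereas you supply proofs of both. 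For the first you use Ehrhart--Macdonald reciprocity together with Hibi's characterization of reflexivity, $L_\Delta(t)=-L_\Delta(-t-1)$, matching coefficients (your Riemann--Roch variant on a generic anticanonical $K3$ of $X(\Sigma_{\Delta^\circ})$, with $h^0=l(\Delta)-1$ and $(-K)^3=3!\vol(\Delta)$, works equally well). For the ``24'' identity you stratify a generic anticanonical $K3$ in $X(\Sigma_\Delta)$ by torus orbits and apply Khovanskii's formula; your bookkeeping is right: the dense torus gives $3!\vol(\Delta^\circ)$, rays through vertices give $-A(v^\circ)$, all other rays and all $3$-cones give $0$, only $2$-cones over unit subsegments of edges $\theta$ survive (each contributing $\ell(\theta^\circ)$, and there are $\ell(\theta)$ of them), and the pyramid decomposition $3!\vol(\Delta^\circ)=\sum_{v\in\Delta[0]}A(v^\circ)$ (facets at lattice distance $1$ by reflexivity) cancels the remaining terms; genericity ($\Delta^\circ$-regularity) justifies the orbit-wise nondegeneracy, as you note. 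What your route buys is a self-contained argument that also explains the ``24'' as $\chi(K3)=24$, very much in the spirit of the $K3$ connections running through the paper; what the paper's version buys is brevity, since both inputs are standard facts about reflexive 3-polytopes available in the cited references.
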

\begin{proof}
The first equality is well-known with the following basic property of reflexive 3-polytope (see, for example, \cite{Ka}):
$$\sum_{\theta \in \Delta[1] } (l^*(\theta)+1) (l^*(\theta^\circ)+1) =24.$$
Combining this with
$$l(\Delta) = \sum_{\Gamma \in \Delta[2]} l^*(\Gamma) + \sum_{\theta \in \Delta[1]} l^*(\theta) + \left |\Delta[0] \right |+1,$$
\begin{align}
\label{eqn3} l(\Delta^\circ) = \sum_{v \in \Delta[0]} l^*(v^\circ) + \sum_{\theta \in \Delta[1]} l^*(\theta^\circ) + \left |\Delta[2] \right |+1
\end{align}
and the Euler formula
$$ \left|\Delta[0] \right | - \left | \Delta[1] \right |+ \left | \Delta[2] \right |=2,$$
we have the second equality.
\end{proof}

One can choose a  smooth $K3$ surface $D_{X(\Sigma_\Delta)}$ from the linear system \mbox{$|-K_{X(\Sigma_\Delta)}|$} (Corollary 4.2.3 of \cite{Ba}).
Note that the line bundle $-K_{X({\Sigma_\Delta})}$ is semi-ample (Lemma 4.1.2 in \cite{CoKa}). Hence  the line bundle $-K_{X({\Sigma_\Delta})}|_{D_{X(\Sigma_\Delta)}}$ is nef and
$$\left(-K_{X({\Sigma_\Delta})}|_{D_{X(\Sigma_\Delta)}}\right)^2 = (-K_{X({\Sigma_\Delta})})^3 =  \left( - K_{\mbp(\Delta)} \right )^3 \ge 4.$$
Accordingly, $-K_{X({\Sigma_\Delta})}|_{D_{X(\Sigma_\Delta)}}$ is very ample (Lemma 2.4 of \cite{Kn}) and the linear system $|-K_{X({\Sigma_\Delta})}|_{D_{X(\Sigma_\Delta)}}|$ contains a smooth curve $c$.
Let $X_\Delta \ra X(\Sigma_\Delta )$ be the blow-up along   $c$ and  $D_{X_\Delta}$ be the proper transform of $D_{X(\Sigma_\Delta)}$. Now $N_{D_{X_\Delta} /{X_\Delta}}$ to
$D_{X_\Delta}$ in ${X_\Delta}$ is trivial and hence  $X_\Delta$ is a quasi-Fano threefold with the anticanonical fibration
$$\overline W_{X_\Delta} : X_\Delta \ra \mbp^1$$
and $\overline W_{X_\Delta}^{-1}(\infty) = D_{{X_\Delta}}$.
We obtain a Calabi--Yau threefold $\Xi_{X_\Delta}$ by smoothing $X_\Delta \cup_D X_\Delta$ as in Section \ref{sec2-1}.
From  (\ref{hodgenumberq}), we can calculate
the Hodge numbers of ${\Xi_{X_\Delta}}$ as follows.
\begin{proposition}
$$h^{1,1}({\Xi_{X_\Delta}}) = l(\Delta) +\sum_{\Gamma \in \Delta[2]} l^*(\Gamma )-3$$
and
$$h^{1,2}(\Xi_{X_\Delta}) = 3! \vol(\Delta^\circ)-l( \Delta ) + \sum_{\Gamma \in \Delta[2]} l^*(\Gamma) +27.$$

\end{proposition}

\begin{proof}

By   (\ref{hodgenumberq}), we have
$$h^{1,1}({\Xi_{X_\Delta}}) = 2 h^{2}({X_\Delta}) -1-\alpha_{X_\Delta}$$
and
$$h^{1,2}(\Xi_{X_\Delta}) = 21 +  h^{3} ({X_\Delta})   - \alpha_{X_\Delta}.$$
Firstly,
$$h^{2}(X_\Delta) = h^{2}(X(\Sigma_\Delta)) + 1 = l(\Delta)-3.$$
Recall $\Sigma_\Delta [1] = \partial \Delta \cap \mbz^3$.
 So an integral point $v$ of  $\partial \Delta$ corresponds to a torus invariant divisor $D_v$ of $X(\Sigma_\Delta)$.
 If $v$ lies in a relative interior of a face (codimension one)  of $\Delta$, $D_v$ does not meet with $D_{X_\Delta}$. Let $G$ be a subgroup of $\Pic(X(\Sigma_\Delta))$ that is generated by $D_v$'s, where $v$ does not lie in  a relative interior of a face   of $\Delta$.
 One can show that the map $G \ra \Pic(D_{X({\Sigma_\Delta})})$ is injective (\S 2, \cite{AsGrMo}). Hence
$$\alpha_{X_\Delta} = \alpha_{X(\Sigma_\Delta)} = \rk G =  l( \Delta ) - \sum_{\Gamma \in \Delta [2]} l^*(\Gamma) -4.$$
On the other hand, we have
$$h^3(X_\Delta) = h^3(X(\Sigma_{\Delta})) + h^1(c) = 0+ (-K_{X({\Sigma_{\Delta}})}^3)+2 = (-K_{\mbp(\Delta)}^3)+2 =3! \vol(\Delta^\circ)+2 $$
because $ X({\Sigma_{\Delta}})  \ra \mbp(\Delta)$ is a crepant resolution.

Therefore
\begin{align*}
h^{1,1}(\Xi_{X_\Delta}) &= 2 h^{2}(X_\Delta) -1-\alpha_{X_\Delta} \\
                &= 2(l(\Delta)-3) -1 - \left( l( \Delta )  -\sum_{\Gamma \in \Delta[2]} l^*(\Gamma ) -4 \right ) \\
                &= l(\Delta) +\sum_{\Gamma \in \Delta[2]} l^*(\Gamma )-3
\end{align*}
and
\begin{align*}
h^{1,2}(\Xi_{X_\Delta}) &= 21 +  h^{3} (X_\Delta)   - \alpha_{X_\Delta} \\
 &= 21 + 3! \vol(\Delta^\circ)+2
-\left(l( \Delta ) - \sum_{\Gamma \in \Delta[2]} l^*(\Gamma) -4 \right)\\
        &= 3! \vol(\Delta^\circ)-l( \Delta ) + \sum_{\Gamma \in \Delta[2]} l^*(\Gamma) +27.
\end{align*}
\end{proof}

Our next goal is to find a mirror partner of the Calabi--Yau threefold $\Xi_{X_\Delta}$.
One may apply the procedures in the previous section to the polar dual $\Delta^\circ$ of the reflexive polytope $\Delta$ and construct
a Calabi--Yau threefold $\Xi_{X_{\Delta^\circ}}$.
 But one can check immediately that the relation (\ref{mirrorrel}) does not hold for $\Xi_{X_\Delta}$, $\Xi_{X_{\Delta^\circ}}$. Hence this naive try does not work.
Despite this, one may still suspect that a mirror partner of $\Xi_{X_\Delta}$ is somehow related with the variety $X({\Sigma_{\Delta^\circ}})$.
Our plan is to modify $X({\Sigma_{\Delta^\circ}})$ to some other quasi Fano threefold $Y$ so that the Calabi--Yau threefolds $\Xi_{X_\Delta}$, $\Xi_{Y}$ satisfy the relation (\ref{mirrorrel}).
But then the question would be what kind of modification to $X({\Sigma_{\Delta^\circ}})$ is needed.
In the next section, we will have a discussion on this question.

\section{Conjectures and speculations} \label{sec3-1}
For  a quasi-Fano threefold $X$ with anticanonical fibration, $W_X : X \ra \mbp^1$. We have built a Calabi--Yau threefold $\Xi_X$. Let us assume that there is a quasi-Fano threefold $Y$ with anticanonical fibration such that  $\Xi_Y$ is a mirror partner of $\Xi_X$.
In this section, we discuss how those two quasi-Fano threefolds $X, Y$ should be related.
This section is  speculative and is intended for explaining how the author came up with the relations (\ref{eqna}), (\ref{eqnb}) and (\ref{eqn2}) that serve as hints in constructing the threefold $Y_{\Delta^\circ}$ in Section \ref{sec3-2} so that the Calabi--Yau threefolds   $\Xi_{X_\Delta}$, $\Xi_{Y_{\Delta^\circ}}$ satisfy the relation (\ref{mirrorrel}).

We start with recent conjectures about Landau--Ginzburg models.
A \emph{Landau--Ginzburg model} is a pair $(Z, W)$, where $Z$ is a quasi-projective manifold  and $W:Z \ra \mbc$ is a fibration (called \emph{superpotential} ) whose generic   fiber is a Calabi--Yau manifold of codimension one.

In \cite{DoHaTh} (and also in \cite{Au} for simpler case), an interesting conjecture has been made:

\medskip
\italic{Let $M$ be a Calabi--Yau manifold and suppose that $M$ admits a degeneration to a union $X_1 \cup X_2$ of two quasi-Fano varieties glued along an anticanonical hypersurface. Its mirror partner $M^\circ$ can be constructed topologically by gluing together the Landau--Ginzburg models $(Z_1, W_1)$ and $(Z_2,W_2)$ of $X_1$ and $X_2$} respectively.
\medskip

%Let $X={X_\Delta}$ for notational generality.
Note that $\Xi_X$ is a smoothing of a normal crossing variety $X_1 \cup_D X_2$ of $X_1$, $X_2$  which are copies of ${X}$. Note  $D_{X_i}=X_1 \cap X_2$ for each $i$.
We have an anticanonical fibration $\overline W_{X_i}:X_i \ra \mbp^1$ with $\overline W_{X_i}^{-1} =D_{X_i}$.
Let $X_i^* = X_i - D_{X_i}$ and consider the map $ W_{{X_i}}:X_i^* \ra \mbc$,  where  $ W_{X_i} = \overline W_{X_i}|_{X_i^*}$.\label{lgm}
Regarding the above conjecture,  we observe the following:
\begin{enumerate}
\item Topologically $\Xi_X$  can be made by gluing the open ends of $X_1^*, X_2^*$ ( \cite{Ty}).
\item The fibration $W_i:X_i^* \ra \mbc$ can be regarded as a superpotential of a Landau--Ginzburg model.
\end{enumerate}
Following the line of thoughts in the conjecture,  we boldly conjecture that the mirror partner ${(\Xi_{X})}^\circ$ of  $\Xi_{X}$ is a semistable smoothing of a normal crossing variety $Y_1 \cup Y_2$ of quasi-Fano threefolds $Y_1, Y_2$ and that
$(X_1^*, W_{X_1})$ and $(X_2^*, W_{X_2})$ are Landau--Ginzburg models of $Y_1, Y_2$ respectively.

Since $X_1, X_2$ are  the same copies of $X$, we expect that $Y_1, Y_2$ are also copies of a single quasi-Fano manifold $Y$ with a smooth anticanonical section $D_Y$.
For the generality of discussion, let us not restrict the dimension $n = \dim Y$ to be three.
The fact that $Y_1 \cup_D Y_2$  is $d$-semistable implies that the  normal bundle
$N_{D_Y/Y} $  on $D_Y$ is trivial.
So the anticanonical linear system of $Y$ induces a fibration $\overline W_Y : Y \ra \mbp^1$ with $\overline W_Y^{-1}(\infty) = D_Y$. We note that the map
$$W_Y : Y^* \ra \mbc$$
 also can be viewed as a superpotential of a Landau--Ginzburg model, where $Y^* = Y - D_Y$ and $W_Y = \overline W_Y|_{Y^*}$.
Noting that $(\Xi_{X})^\circ = \Xi_Y$ can be topologically made by gluing the open ends of $Y_1^*, Y_2^*$, it is reasonable to conclude that $(Y_1^*, W_{Y_1})$ and $(Y_2^*, W_{Y_2})$ are Landau--Ginzburg models of $X_1, X_2$ respectively.
In sum, we speculate:

\medskip
\emph{If $(X^*, W_{X})$ is a Landau--Ginzburg model of $Y$ and  $(Y^*, W_{Y})$  is a Landau--Ginzburg model of $X$, then $(\Xi_X, \Xi_Y$) is a mirror pair of Calabi--Yau manifolds.}

\medskip
In order to find  the quasi-Fano manifold $Y$, we need more information about it.
A conjectural Hodge number relation, suggested in \cite{KaKoPa}, is relevant to this task.
The authors in \cite{KaKoPa} conjecture that if $(X^*, W_X)$ is a Landau--Ginzburg model of $Y$, then the following holds ((3.1.3), \cite{KaKoPa}):
\begin{align}
\label{koneq1} h^{a+n}(X^*, W_X^{-1}(t)) = \sum_{p-q=a}h^{p,q}(Y),
\end{align}
where $t$ is a generic point in the image of $W_X$ and $n=\dim X$.
If $(Y^*, W_Y)$ is a Landau--Ginzburg model of $X$, it becomes
\begin{align}
\label{koneq2} h^{a+n}(Y^*, W_Y^{-1}(t)) = \sum_{p-q=a}h^{p,q}(X).
\end{align}
These two equations imply relations in topological Euler characteristics
\begin{align*}
\chi(X^*, W_X^{-1}(t)) = (-1)^n \chi(Y), \chi(Y^*, W_Y^{-1}(t)) = (-1)^n \chi(X).
\end{align*}
From the pair $(Y^*, W_Y^{-1}(t))$, we have an exact sequence
$$\cdots \ra H^i(Y^*,  W_Y^{-1}(t))) \ra H^i(Y^*) \ra H^i( W_Y^{-1}(t)))  \ra \cdots,$$
which gives a relation of topological Euler characteristic:
$$\chi(Y^*) = \chi(Y^*,  W_Y^{-1}(t))) + \chi ( W_Y^{-1}(t))).$$
Similarly we have
$$\chi(X^*) = \chi(X^*,  W_X^{-1}(t))) + \chi ( W_X^{-1}(t))).$$
Note also $\chi(Y) = \chi(Y^*) + \chi(D_Y)$, $\chi(X) = \chi(X^*) + \chi(D_X)$.
Combining these equations, we have
\begin{equation}
\label{cyeulerrel}
 \left \{\,\, \begin{aligned}
        \frac{1-(-1)^n}{2} \cdot \left(\chi(X) + \chi(Y)\right)  &=  \chi(D_X) + \chi(D_Y),\\
        \frac{1+(-1)^n}{2} \cdot \left(\chi(X) - \chi(Y)\right)  &=  \chi(D_X) - \chi(D_Y),
       \end{aligned}
 \right.
\end{equation}
which implies
\begin{align}
\label{hypermirr} \chi(D_X) = (-1)^{n-1} \chi(D_Y).
\end{align}
Note $\dim D_X = \dim D_Y = n-1$. So this suggests that $(D_X, D_Y)$ should be a mirror pair of Calabi--Yau manifolds.

Assume $n=3$ for the rest of this section.
In the natural map $H_2(D_Y) {\ra} H_2(Y)$, we have
$$\dim H_2(D_Y) = \dim \ker ( H_2(D_Y) {\ra} H_2(Y)) + \dim \im ( H_2(D_Y) {\ra} H_2(Y)).$$
and, by Poincar\'e duality,
$$\dim \im ( H_2(D_Y) {\ra} H_2(Y)) = \dim \im ( H^2(Y) {\ra} H^2(D_Y)) = \alpha_Y,$$
which implies
$$\dim \ker ( H_2(D) \ra H_2(Y)) = h^2(D_Y) - \alpha_Y.$$

From the pair $(Y, D_Y)$, we have an exact sequence
$$0= H_3(D_Y) \ra H_3(Y) \ra H_3(Y, D_Y) \ra H_2(D_Y) \ra H_2(Y) \ra \cdots,$$
which implies
$$h_3(Y, D_Y) = h_3(Y) +\dim \ker ( H_2(D) \ra H_2(Y)) =h^3(Y) +  h^2(D_Y) - \alpha_Y.$$

From the pair $(Y^*, W_Y^{-1}(t))$, we have an exact sequence
\begin{align*}
0= H^1( W_Y^{-1}(t)) \ra H^{2}(Y^*, W_Y^{-1}(t)) &\ra H^2(Y^*) \ra H^2( W_Y^{-1}(t))\\
\ra H^{3}(Y^*, W_Y^{-1}(t)) &\ra H^3(Y^*) \ra  H^3( W_Y^{-1}(t))=0,
\end{align*}
which implies
$$ h^{2}(Y^*, W_Y^{-1}(t)) +  h^2( W_Y^{-1}(t)) + h^3(Y^*) =  h^2(Y^*) +  h^{3}(Y^*, W_Y^{-1}(t)).$$

By Lefschetz duality, $h_3(Y, D_Y) = h^3(Y^*)$ and Equation (\ref{koneq2}) implies
$$h^{2}(Y^*, W_Y^{-1}(t)) = h^{2,1}(X), h^{3}(Y^*, W_Y^{-1}(t))= 2 + 2h^{1,1}(X).$$

Combining theses, finally we have
\begin{align*}
h^3(Y)+h^2(D) -  \alpha_Y &=2h^{1,2}(Y) -h^2(Y)+h^{1,2}(X) +23.
\end{align*}
i.e.\
\begin{align}
\label{eqna}   \alpha_Y &= h^2(Y)-h^{1,2}(X) -1.
\end{align}

Using the assumption that $(Y^*, W_Y)$ is also a Landau--Ginzburg model of $X$, we have
\begin{align}
\label{eqnb}    \alpha_X &= h^2(X)-h^{1,2}(Y) -1.
\end{align}
 Finally, together with (\ref{cyeulerrel}), we have
\begin{align}
\label {eqn2} \alpha_X +\alpha_Y = 20.
\end{align}

In sum, we conjecture

\medskip
\emph{``The relations (\ref{eqna}),  (\ref{eqnb}) and  (\ref{cyeulerrel})
 hold if $\Xi_X, \Xi_Y$ are mirror pairs of Calabi--Yau threefolds''}
 \medskip

This conjecture will be more concretized to form Definition \ref{FTM}  and Definition \ref{higherdefqm}.

\section{Mirror partner of $\Xi_{X_\Delta}$} \label{sec3-2}

Now let us come back to the problem of finding a mirror partner of $\Xi_{X_\Delta}$.
For our previous $X_{\Delta^\circ}$, we have
$$ \alpha_{X_\Delta} +\alpha_{X_{\Delta^\circ}} \le 20,$$
which may not comply with (\ref{eqn2}) in general and  (\ref{eqna}), (\ref{eqnb}) do  not hold.
Instead one can show
$$\alpha_{X_{\Delta}} + \rk \Pic(D_{X_{\Delta^\circ}}) =20,$$
using $\Pic(D_{X_{\Delta^\circ}}) \simeq \Pic(D_{X({\Sigma_{\Delta^\circ}})})$ and results in \cite{Ro}.

Note that $\Pic_{{X_{\Delta^\circ}}}(D_{X_{\Delta^\circ}})$ is a sublattice of $\Pic(D_{X_{\Delta^\circ}})$. Let us investigate which classes of  $\Pic(D_{X_{\Delta^\circ}})$ are missing in  $\Pic_{{X_{\Delta^\circ}}}(D_{X_{\Delta^\circ}})$.
For $v \in \partial \Delta^\circ \cap \mbz^3$, let $D_v$ be the corresponding torus invariant divisor of $X(\Sigma_{\Delta^\circ})$.
Then the intersection $D_v \cap D_{X\left(\Sigma_{\Delta^\circ}\right)}$ may not be irreducible if  $v$ is a point of the relative interior of an edge $\theta$ of $\Delta^\circ$.
So some classes in $\Pic(D_{X_{\Delta^\circ}})$ that come from components of $D_v \cap D_{X\left(\Sigma_{\Delta^\circ}\right)}$ may not come from classes in $\Pic(X_{\Delta^\circ})$ --- these are the missing classes we are looking for.
With this observation, we will find some  modification to $X(\Sigma_{\Delta^\circ})$ that leads to  a quasi-Fano threefold $Y_{\Delta^\circ}$ such that
\begin{enumerate}
\item $\Pic_{Y_{\Delta^\circ}}(D_{{Y_{\Delta^\circ}}}) \simeq \Pic(D_{X(\Sigma_{\Delta^\circ})} ) \,\,\,(\simeq  \Pic(D_{X_{\Delta^\circ}}))$,
\item The normal bundle $N_{{D_{Y_{\Delta^\circ}}}/ {Y_{\Delta^\circ}}}$ is trivial,
\item The relations (\ref{eqna}),  (\ref{eqnb}) and (\ref{eqn2}) hold for $X_\Delta$, $Y_{\Delta^\circ}$.
\end{enumerate}

If one blows up \emph{sequentially} $X({\Sigma_{\Delta^\circ}})$ along \emph{all} the irreducible curves $c_1, c_2, \cdots, c_k$ such that
\begin{align}\sum_{v \in \partial \Delta^\circ \cap \mbz^3} D_v \cap D_{X({\Sigma_{\Delta^\circ}})}=c_1+ c_2+ \cdots + c_k,\label{ydelta}
\end{align}
then he gets a threefold that turns out to satisfy (1), (2) and (3) in the above properties.
The sequential blow-up is done as follows.
Let $Y^{(1)} \ra X({\Sigma_{\Delta^\circ}})$ be the blow-up along $c_1$ and $D^{(1)}$ be the proper transform of $D_{X(\Sigma_{\Delta^\circ})}$. Since the blow-up center $c_1$ lies on $D_{X(\Sigma_{\Delta^\circ})}$, $D^{(1)}$ is isomorphic to $D_{X(\Sigma_{\Delta^\circ})}$. So  $D^{(1)}$ contains copies of $c_1, c_2, \cdots, c_k$. We denote them by $c^{(1)}_1, c^{(1)}_2, \cdots, c^{(1)}_k$.
Note that $\sum_{v \in \partial \Delta^\circ \cap \mbz^3} D_v $ is an anticanonical divisor of $X({\Sigma_{\Delta^\circ}})$. Hence
the divisor $D^{(1)}|_{D^{(1)}}$ is linearly  equivalent to
$$c^{(1)}_1 + c^{(1)}_2+  \cdots + c^{(1)}_k - c^{(1)}_1 = c^{(1)}_2 + c^{(1)}_3+  \cdots + c^{(1)}_k.$$
We construct $Y^{(2)}, Y^{(3)}, \cdots, Y^{(k)}$ inductively as follows.
Let $Y^{(l+1)} \ra Y^{(l)}$ be the blow-up along $c^{(l)}_{l+1}$ and $D^{(l+1)}$ be the proper transform of $D^{(l)}$. Since the blow-up center $c^{(l)}_{l+1}$ lies on $D^{(l)}$, $D^{(l+1)}$ is isomorphic to $D^{(l)}$. So  $D^{(l+1)}$ contains copies of $c^{(l)}_1, c^{(l)}_2, \cdots, c^{(l)}_k$. We denote them by $c^{(l+1)}_1, c^{(l+1)}_2, \cdots, c^{(l+1)}_k.$
Let $Y_{\Delta^\circ}=Y^{(k)}$.
 Note that
the divisor $D^{(l)}|_{D^{(l)}}$ is linearly  equivalent to
$$ c^{(l)}_{l+1} + c^{(l)}_{l+2}+  \cdots + c^{(l)}_k.$$
Hence the normal bundle $N_{{D_{Y_{\Delta^\circ}}}/ {Y_{\Delta^\circ}}}$ is trivial.
Since the curves $c_1, c_2, \cdots, c_k$ all together generate the lattice   $\Pic(D_{X(\Sigma_{\Delta^\circ})})$ (\cite {Ro}), we have
$$\Pic_{Y_{\Delta^\circ}}(D_{{Y_{\Delta^\circ}}}) \simeq  \Pic(D_{X(\Sigma_{\Delta^\circ})} ).$$
We denote the composite  of the above blow-ups by $Y_{\Delta^\circ} \ra X({\Sigma_{\Delta^\circ}})$.

Note
\begin{align*}
  \alpha_{Y_{\Delta^\circ}}  &=\alpha_{X({\Sigma_{\Delta^\circ}})} + \sum_{\theta \in {\Delta^\circ}[1] } (l(\theta)-2) (l(\theta^\circ)-1-1)\\
             &=  l( \Delta^\circ ) -4 - \sum_{ \Gamma \in {\Delta^\circ}[2]} l^*(\Gamma ) +  \sum_{\theta \in {\Delta^\circ}[1] } (l(\theta)-2) (l(\theta^\circ)-2)\\
           &=  l( \Delta^\circ )  - \sum_{ v \in {\Delta}^{[0]}} l^*(v^\circ ) +  \sum_{\theta \in {\Delta}[1] } l^* (\theta^\circ)l^*(\theta)-4.
\end{align*}
Hence we have
\begin{align*}
  \alpha_{X_{\Delta}} + \alpha_{Y_{\Delta^\circ}} &= \left( l( \Delta ) - \sum_{\Gamma \in \Delta [2]} l^*(\Gamma) -4 \right) + \left(  l( \Delta^\circ )  - \sum_{ v \in {\Delta}^{[0]}} l^*(v^\circ ) +  \sum_{\theta \in {\Delta}[1] } l^* (\theta^\circ)l^*(\theta)-4 \right)\\
   &= 20,
\end{align*}
where Lemma \ref{reflexeqn} was used.
So we explicitly checked that (\ref{eqn2}) is satisfied.

We have
\begin{align*}
k &= | \Delta^\circ [0]| + \sum_{\theta \in {\Delta^\circ}[1] } (l(\theta)-2) (l(\theta^\circ)-1)\\
  &= | \Delta[2]| + \sum_{\theta \in {\Delta}[1] } (l(\theta^\circ)-2) (l(\theta)-1)\\
  &=  | \Delta[2]| + \sum_{\theta \in {\Delta}[1] } l^*(\theta^\circ) l^*(\theta) +\sum_{\theta \in {\Delta}[1] } l^*(\theta^\circ). \\
\end{align*}

 Note
 \begin{align*}
 h^2({Y_{\Delta^\circ}}) &=  h^2(X({\Sigma_{\Delta^\circ}}) + k \\
 &=  l(\Delta^\circ)+  | \Delta[2]| + \sum_{\theta \in {\Delta}[1] } l^*(\theta^\circ) l^*(\theta) +\sum_{\theta \in {\Delta}[1] } l^*(\theta^\circ)-4.\\
 \end{align*}
Let $g(c_i)$ be the genus of $c_i$, then
$$h^3({Y_{\Delta^\circ}}) = h^1(X({\Sigma_{\Delta^\circ}}))+ \sum_i h^1(c_i) =2\sum_i g(c_i) = 2\sum_{v \in {\Delta^\circ}[0]} l^*(v^\circ) =2\sum_{\Gamma \in {\Delta}[2]} l^*(\Gamma).$$
Now one can check that   (\ref{eqna}), (\ref{eqnb}) hold for $X_\Delta$, $Y_{\Delta^\circ}$.

Let us build a Calabi--Yau threefold $\Xi_{Y_{\Delta^\circ}}$ from ${Y_{\Delta^\circ}}$ and calculate the Hodge numbers of $\Xi_{Y_{\Delta^\circ}}$.
Firstly,

\begin{align*}
h^{1,1}(\Xi_{Y_{\Delta^\circ}}) &= 2 h^{2}({Y_{\Delta^\circ}}) -1-\alpha_{Y_{\Delta^\circ}} \\
                &= 2 \left ( l(\Delta^\circ)+  | \Delta[2]| + \sum_{\theta \in {\Delta}[1] } l^*(\theta^\circ) l^*(\theta) +\sum_{\theta \in {\Delta}[1] } l^*(\theta^\circ)-4 \right)-1 \\
                & \,\,\,\,\,\,\,\,\,\,\,\,\,\,\,\,\,\,\,\,\,\,\,\,-\left(l( \Delta^\circ )  - \sum_{ v \in {\Delta}^{[0]}} l^*(v^\circ ) +  \sum_{\theta \in {\Delta}[1] } l^* (\theta^\circ)l^*(\theta)-4\right) \\
                &=  l(\Delta^\circ)+  2 | \Delta[2]| +  \sum_{\theta \in {\Delta}[1] } l^*(\theta^\circ) l^*(\theta) +2 \sum_{\theta \in {\Delta}[1] } l^*(\theta^\circ)+ \sum_{ v \in {\Delta}^{[0]}} l^*(v^\circ )-5 \\
\end{align*}
and

\begin{align*}
h^{1,2}(\Xi_{Y_{\Delta^\circ}}) &= 21 +  h^{3} ({Y_{\Delta^\circ}})   - \alpha_{Y_{\Delta^\circ}}  \\
               &= 21 + 2\sum_{\Gamma \in {\Delta}[2]} l^*(\Gamma)   - \left(l( \Delta^\circ )  - \sum_{ v \in {\Delta}^{[0]}} l^*(v^\circ ) +  \sum_{\theta \in {\Delta}[1] } l^* (\theta^\circ)l^*(\theta)-4 \right) \\
               &= 25 + 2\sum_{\Gamma \in {\Delta}[2]} l^*(\Gamma)   - l( \Delta^\circ )  + \sum_{ v \in {\Delta}^{[0]}} l^*(v^\circ ) -  \sum_{\theta \in {\Delta}[1] } l^* (\theta^\circ)l^*(\theta). \\
\end{align*}

Now we confirm the Hodge number mirror relation (\ref{mirrorrel})  for $\Xi_{X_\Delta}$, $\Xi_{Y_{\Delta^\circ}}$.
\begin{theorem}$\,$ \label{dcmirror}
\cen{$h^{1,1}(\Xi_{X_\Delta}) = h^{1,2}(\Xi_{Y_{\Delta^\circ}})$ and  $h^{1,2}(\Xi_{X_\Delta}) = h^{1,1}(\Xi_{Y_{\Delta^\circ}}).$}
\end{theorem}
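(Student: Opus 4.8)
The plan is to prove Theorem~\ref{dcmirror} by a direct comparison of the explicit Hodge-number formulas already established in Sections~\ref{sec2} and~\ref{sec3}, reducing each of the two desired equalities to the combinatorial identities recorded in Lemma~\ref{reflexeqn}. The four quantities $h^{1,1}(\Xi_{X_\Delta})$, $h^{1,2}(\Xi_{X_\Delta})$, $h^{1,1}(\Xi_{Y_{\Delta^\circ}})$ and $h^{1,2}(\Xi_{Y_{\Delta^\circ}})$ have all been written above as integer-linear combinations of $l(\Delta)$, $l(\Delta^\circ)$, $3!\vol(\Delta^\circ)$, and the lattice-point counts $l^*(\Gamma)$ for $\Gamma\in\Delta[2]$, $l^*(\theta)$ and $l^*(\theta^\circ)$ for $\theta\in\Delta[1]$, and $l^*(v^\circ)$ for $v\in\Delta[0]$. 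So the whole proof is an exercise in substituting and collecting these terms.

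For the first equality $h^{1,1}(\Xi_{X_\Delta}) = h^{1,2}(\Xi_{Y_{\Delta^\circ}})$, I would subtract the two computed expressions; the resulting relation is, after transposition, exactly the second identity of Lemma~\ref{reflexeqn}, namely $l(\Delta) + l(\Delta^\circ) + \sum_{\theta\in\Delta[1]} l^*(\theta)l^*(\theta^\circ) - \sum_{v\in\Delta[0]} l^*(v^\circ) - \sum_{\Gamma\in\Delta[2]} l^*(\Gamma) = 28$. Hence this equality holds with no further work.

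For the second equality $h^{1,2}(\Xi_{X_\Delta}) = h^{1,1}(\Xi_{Y_{\Delta^\circ}})$, one preliminary reduction is needed: first replace $3!\vol(\Delta^\circ)$ by $2l(\Delta^\circ) - 6$, using the first identity of Lemma~\ref{reflexeqn} applied to the reflexive polytope $\Delta^\circ$ (recall $(\Delta^\circ)^\circ = \Delta$), and then use Equation~(\ref{eqn3}) to eliminate $\sum_{\theta\in\Delta[1]} l^*(\theta^\circ)$ in favour of $l(\Delta^\circ)$, $\sum_{v\in\Delta[0]} l^*(v^\circ)$ and $|\Delta[2]|$. After these two substitutions the difference of the two sides again collapses to the same second identity of Lemma~\ref{reflexeqn}, so the equality follows.

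The only obstacle here is bookkeeping: one must keep careful track of the face correspondence $\Delta[k] \leftrightarrow \Delta^\circ[n-1-k]$ when invoking Lemma~\ref{reflexeqn} for $\Delta^\circ$ rather than for $\Delta$, and manage the signs across the long chains of lattice-point sums. No geometric input beyond what has already been developed --- the blow-up computations of $h^2$ and $h^3$, the triviality of the relevant normal bundles, and the smoothing formula~(\ref{hodgenumberq}) --- is required; once the algebra is performed, Theorem~\ref{dcmirror} drops out. It is perhaps worth remarking that both halves of the statement reduce to the \emph{same} identity of Lemma~\ref{reflexeqn}, which is the combinatorial shadow of the mirror symmetry being exhibited.
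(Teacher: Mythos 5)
Your proposal is correct and follows essentially the same route as the paper: both equalities are verified by subtracting the explicit Hodge-number formulas, with the first difference collapsing directly to the second identity of Lemma~\ref{reflexeqn}, and the second difference handled by first substituting $3!\vol(\Delta^\circ)=2l(\Delta^\circ)-6$ and Equation~(\ref{eqn3}) before invoking that same identity. Your closing observation that both halves reduce to the same combinatorial identity is also consistent with how the paper's computation proceeds.
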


\begin{proof}

Firstly,
\begin{align*}
h^{1,1}(\Xi_{X_\Delta}) - h^{1,2}(\Xi_{Y_{\Delta^\circ}}) &= \left (l(\Delta) +\sum_{\Gamma \in \Delta[2]} l^*(\Gamma )-3 \right) \\
&\,\,\,\,\,\,\,\,\,\,\,- \left(25 + 2\sum_{\Gamma \in {\Delta}[2]} l^*(\Gamma)   - l( \Delta^\circ )  + \sum_{ v \in {\Delta}^{[0]}} l^*(v^\circ ) -  \sum_{\theta \in {\Delta}[1] } l^* (\theta^\circ)l^*(\theta)\right)\\
&= l(\Delta) -\sum_{\Gamma \in \Delta[2]} l^*(\Gamma ) + l( \Delta^\circ )  - \sum_{ v \in {\Delta}^{[0]}} l^*(v^\circ ) +  \sum_{\theta \in {\Delta}[1] } l^* (\theta^\circ)l^*(\theta)-28 \\
&=0,
\end{align*}
where Lemma \ref{reflexeqn} was used.

Secondly
\begin{align*}
h^{1,2}(\Xi_{X_\Delta}) - h^{1,1}(\Xi_{Y_{\Delta^\circ}}) &=3! \vol(\Delta^\circ)-l( \Delta ) + \sum_{\Gamma \in \Delta[2]} l^*(\Gamma) +27 \\
&\,\,\,\,-\left(  l(\Delta^\circ)+  2 | \Delta[2]| +  \sum_{\theta \in {\Delta}[1] } l^*(\theta^\circ) l^*(\theta) +2 \sum_{\theta \in {\Delta}[1] } l^*(\theta^\circ)+ \sum_{ v \in {\Delta}^{[0]}} l^*(v^\circ )-5\right)\\
&=l(\Delta^\circ)  -l( \Delta ) + \sum_{\Gamma \in \Delta[2]} l^*(\Gamma) +26 -  2 | \Delta[2]| -  \sum_{\theta \in {\Delta}[1] } l^*(\theta^\circ) l^*(\theta)\\
&\,\,\,\,\,\,\,\,\,\,\,\,\,\,\,\,\,\,\,\,\,\,\,\,\,\, -2 \sum_{\theta \in {\Delta}[1] } l^*(\theta^\circ)- \sum_{ v \in {\Delta}^{[0]}} l^*(v^\circ )\\
 &=-l(\Delta^\circ)  -l( \Delta ) + \sum_{\Gamma \in \Delta[2]} l^*(\Gamma) +28 \\
 &\,\,\,\,\,\,\,\,\,\,\,\,\,\,\,\,\,\,\,\,\,\,\,\,\,\,-  \sum_{\theta \in {\Delta}[1] } l^*(\theta^\circ) l^*(\theta)+  \sum_{v \in \Delta[0]} l^*(v^\circ)
 \,\,\,\,\,\,\,\,\,\,\,\,\,\,\,\,\,\,\,\,\,\,\,\,\,\,\,\,\,\, (\because \,\,\,\, (\ref{eqn3}\,))\\
 &=0,\\
\end{align*}
where Lemma \ref{reflexeqn} was used again.

\end{proof}
Let us take an example.
\begin{example}\label{exam1}                       %sage #87
Consider a reflexive 3-polytope $\Delta$ whose  vertices are
$$(1,0,0), (0,1,0), (0,0,1), (-4, -4, -3).$$
This reflexive polytope  $\Delta$ gives the quasi-Fano threefold $X_\Delta$ with
$$h^2(X_\Delta) = 6, h^3(X_\Delta) = 38,  \alpha_{X_\Delta} = 4$$
and its polar dual gives another quasi-Fano threefold $Y_{\Delta^\circ}$ with
    $$h^2({Y_{\Delta^\circ}}) = 36,  h^3({Y_{\Delta^\circ}}) = 2, \alpha_{Y_{\Delta^\circ}} = 16.$$
The Hodge numbers of the pair $(\Xi_{X_\Delta}, \Xi_{Y_{\Delta^\circ}})$  of resulted Calabi--Yau threefolds are
  $$ h^{1,1}(\Xi_{X_\Delta}) = 7, h^{1,2}(\Xi_{X_\Delta}) = 55, h^{1,1}(\Xi_{Y_{\Delta^\circ}})=55, h^{1,2}(\Xi_{Y_{\Delta^\circ}})=7.$$
\end{example}

For 4319 equivalence classes of reflexive 3-polytopes, we give all the Hodge numbers of the pairs of $(\Xi_{X_{\Delta}}, \Xi_{Y_{\Delta^\circ}})$'s in Table 1 of \cite{Lee2}.

%\section{Mirror maps}

\section{Mirror pairs of quasi-Fano threefolds}  \label{sec4}
Now we have many examples of $X_{\Delta}, Y_{\Delta^\circ}$ which give pairs of Calabi--Yau threefolds
$(\Xi_{X_{\Delta}}, \Xi_{Y_{\Delta^\circ}})$'s satisfying the relation (\ref{mirrorrel}).
All the pairs $(X_{\Delta}, Y_{\Delta^\circ})$
satisfy
$$\alpha_{X_{\Delta}} + \alpha_{Y_{\Delta^\circ}}=20.$$
It seems that  more delicate structure is involved --- mirror symmetry for lattice polarized $K3$ surfaces.

Let $L$ be a primitive sublattice  of the $K3$ lattice with
 signature $(1, t - 1)$ and $1 \le t \le 19$.
An \emph{$L$-polarized $K3$ surface} is a pair $(S, j)$ where $S$ is a $K3$ surface
and $j : L \ra \Pic(S)$ is a primitive lattice embedding.
One can construct a moduli space ${\K3}_L$, parametrizing $L$-polarized
$K3$ surfaces, which has dimension $20 - t$.
Suppose that there is another lattice $L^\circ$ such that
the orthogonal complement $L^\bot$ in the $K3$ lattice has a decomposition
$$L^\bot  \cong U \oplus L^\circ,$$
where $U$ is the hyperbolic lattice.
The moduli space ${\K3}_{L^\circ}$ is defined as a  mirror of ${\K3}_L$ in \cite{Do} and the pair $(\K3_L,  {\K3}_{L^\circ})$ of moduli spaces was shown to have properties analogous to those of mirror symmetry of Calabi--Yau threefolds.
The lattice pair  $(L, L^\circ)$ is called a \emph{$K3$-mirror pair} of lattices and we have
$$\rk L + \rk L^\circ = 20.$$

In \cite{Ro}, it is noticed  that
$$\left( \Pic_{X(\Sigma_\Delta )} \left(D_{X(\Sigma_\Delta)}\right),  \Pic \left (D_{X(\Sigma_{\Delta^\circ} )}\right) \right) $$ is a $K3$-mirror pair of lattices.
Note
$$\Pic_{X_\Delta} (D_{X_\Delta}) \simeq \Pic_{X(\Sigma_\Delta)} (D_{X(\Sigma_\Delta)}),
\Pic_{Y_{\Delta^\circ}} (D_{Y_{\Delta^\circ}}) \simeq \Pic (D_{X(\Sigma_{\Delta^\circ})}).$$

Hence, for all the  pairs of $(X_\Delta, Y_{\Delta^\circ})$,

\medskip
\emph{ the pair $\left( \Pic_{X_{\Delta}}(D_{X_{\Delta}}), \Pic_{Y_{\Delta^\circ}}(D_{Y_{\Delta^\circ}}) \right)$  of lattices is a $K3$-mirror pair.
}
\medskip

This indicates that there are some  connections between the mirror symmetries for $K3$ surfaces and  pairs of quasi-Fano threefolds that lead to pairs of Calabi--Yau threefolds, satisfying the Hodge number relation (\ref{mirrorrel}).
For a quasi-Fano threefold $X$ whose anticanonical map is a $K3$-fibration, not all $\Pic_{D_X}(D_X)$-polarized $K3$ surfaces appear as an anticanonical section of $X$ --- the anticanonical linear system $|-K_X|$ is just a pencil and so too small to contain all such $K3$ surfaces.
Instead it is expected that a generic $\Pic_{D_X}(D_X)$-polarized $K3$ surface may appear as an anticanonical section of some deformation of $X$.

With all these properties, it seems reasonable to call  $\left( X_{\Delta}, Y_{\Delta^\circ} \right)$ a mirror pair of quasi-Fano threefolds. We give a  definition for three-dimensional case, collecting properties the pairs  $({X_{\Delta}}, {Y_{\Delta^\circ}})$ satisfy.
\begin{definition}\label{FTM}
A pair $(X, Y)$ of quasi-Fano threefolds whose anticanonical maps are $K3$-fibrations is called a mirror pair if
$$\left( \Pic_{X}(D_{X}), \Pic_Y (D_{Y}) \right)$$
 is a $K3$-mirror pair of lattices and
$$ \alpha_X = h^2(X)-h^{1,2}(Y) -1, \alpha_Y = h^2(Y)-h^{1,2}(X) -1,$$
where $D_X$, $D_Y$ are generic smooth anticanonical sections of $X$, $Y$ respectively.
\end{definition}

We show that  a mirror pair of quasi-Fano threefolds give rise a pair of  Calabi--Yau threefolds that satisfy the relation (\ref{mirrorrel}).
\begin{proposition} \label{dcmirrorp}
Let $(X, Y)$ be a mirror pair of quasi-Fano threefolds and consider Calabi--Yau threefolds $\Xi_X$, $\Xi_Y$ from them. Then
$$h^{1,1}(\Xi_X) = h^{1,2}(\Xi_Y),  h^{1,2}(\Xi_X) = h^{1,1}(\Xi_Y).$$
\end{proposition}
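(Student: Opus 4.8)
The plan is to reduce the statement to two inputs: the Hodge-number formula (\ref{hodgenumberq}) specialized to the self-glued double $\mathcal X = X\cup_D X$, and the two numerical constraints packaged into Definition \ref{FTM}. Since $h^{1,2}(\Xi_X)=h^{2,1}(\Xi_X)$ always, it suffices to check the two scalar identities $h^{1,1}(\Xi_X)=h^{1,2}(\Xi_Y)$ and $h^{1,2}(\Xi_X)=h^{1,1}(\Xi_Y)$, i.e.\ that two integer differences vanish.

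\textbf{Step 1 (Hodge numbers of the doubles).} I would first specialize (\ref{hodgenumberq}) to $\mathcal X = X\cup_D X$. Here $X_1=X_2=X$ and $D_\mcX = D_X$, and the two boundary copies carry the \emph{same} sublattice $\Pic_X(D_X)$, so $\alpha_\mcX=\alpha_X$; hence
\begin{align*}
h^{1,1}(\Xi_X) &= 2h^2(X) - \alpha_X - 1, & h^{1,2}(\Xi_X) &= 21 + h^3(X) - \alpha_X,
\end{align*}
and likewise $h^{1,1}(\Xi_Y) = 2h^2(Y) - \alpha_Y - 1$, $h^{1,2}(\Xi_Y) = 21 + h^3(Y) - \alpha_Y$. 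Next I would use the quasi-Fano hypothesis $h^i(X,\mco_X)=0$ for $i>0$: by Hodge symmetry this gives $h^{2,0}(X)=h^{0,2}(X)=0$ and $h^{3,0}(X)=h^{0,3}(X)=0$, so $h^2(X)=h^{1,1}(X)$ and $h^3(X)=2h^{1,2}(X)$ (and the same for $Y$).

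\textbf{Step 2 (numerical inputs and substitution).} From Definition \ref{FTM} I have $\alpha_X = h^2(X) - h^{1,2}(Y) - 1$ and $\alpha_Y = h^2(Y) - h^{1,2}(X) - 1$, which I rewrite as $2h^2(X) - 2h^{1,2}(Y) = 2\alpha_X + 2$ and $2h^2(Y) - 2h^{1,2}(X) = 2\alpha_Y + 2$; and since $(\Pic_X(D_X),\Pic_Y(D_Y))$ is a $K3$-mirror pair of lattices, $\alpha_X+\alpha_Y = \rk\Pic_X(D_X)+\rk\Pic_Y(D_Y) = 20$. Substituting into Step 1,
\begin{align*}
h^{1,1}(\Xi_X) - h^{1,2}(\Xi_Y) &= \bigl(2h^2(X) - \alpha_X - 1\bigr) - \bigl(21 + 2h^{1,2}(Y) - \alpha_Y\bigr)\\
&= (2\alpha_X + 2) - 22 - \alpha_X + \alpha_Y = \alpha_X + \alpha_Y - 20 = 0,
\end{align*}
and symmetrically
\begin{align*}
h^{1,2}(\Xi_X) - h^{1,1}(\Xi_Y) &= \bigl(21 + 2h^{1,2}(X) - \alpha_X\bigr) - \bigl(2h^2(Y) - \alpha_Y - 1\bigr)\\
&= 22 - (2\alpha_Y + 2) - \alpha_X + \alpha_Y = 20 - \alpha_X - \alpha_Y = 0.
\end{align*}

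The argument is essentially mechanical once (\ref{hodgenumberq}) and the two equalities of Definition \ref{FTM} are in place, so there is no genuine obstacle; the only things worth flagging are the reduction $\alpha_\mcX=\alpha_X$ for the double (so that (\ref{hodgenumberq}) applies with $\alpha_\mcX$ replaced by $\alpha_X$), and the fact that the $K3$-mirror-pair hypothesis is genuinely used — the two equalities of Definition \ref{FTM} alone only yield $\alpha_X+\alpha_Y = h^2(X)+h^2(Y)-h^{1,2}(X)-h^{1,2}(Y)-2$, and it is the lattice condition that pins this down to $20$.
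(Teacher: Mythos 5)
Your proof is correct and follows essentially the same route as the paper: specialize the smoothing formulas $h^{1,1}(\Xi_X)=2h^2(X)-\alpha_X-1$, $h^{1,2}(\Xi_X)=21+h^3(X)-\alpha_X$ to the double $X\cup_D X$, use $h^3=2h^{1,2}$ from the quasi-Fano vanishing, and substitute the two equalities of Definition \ref{FTM} together with $\alpha_X+\alpha_Y=20$ coming from the $K3$-mirror lattice condition. Your explicit flagging of $\alpha_\mcX=\alpha_X$ for the self-glued double and of the fact that only the rank consequence of the lattice condition is needed matches what the paper leaves implicit (and notes in its subsequent remark), so there is nothing to correct.
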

\begin{proof}
\begin{align*}
h^{1,1}({\Xi_{X}}) &= 2 h^{2}({X}) -1-\alpha_{X} \\
                   &= 2 ( \alpha_X +h^{1,2}(Y) +1) -1-\alpha_{X} \\
                   &=  \alpha_X +2 h^{1,2}(Y) +1 \\
                   &= 20-\alpha_Y +2 h^{1,2}(Y) +1 \\
                   &=h^{1,2}(\Xi_Y)
\end{align*}
and similarly
\begin{align*}
h^{1,2}(\Xi_{X}) = 21 +  h^{3} ({X})   - \alpha_{X}
                 =   - \alpha_Y +2 h^2(Y) -1
                 =h^{1,1}(\Xi_Y).
\end{align*}
\end{proof}

Let us give more examples of  mirror pairs of quasi-Fano threefolds other than those from toric varieties.
They come from non-symplectic involutions on $K3$ surfaces.

An involution $\rho$ of a $K3$ surface $S$ is called non-symplectic if $\rho^*(\omega) = -\omega $ for each $w \in  H^{2,0}(S)$.
Let $\left( H^2(S, \mbz) \right )^\rho$ be the invariant sublattice of $H^2(S, \mbz)$ by $\rho^*$.
Non-symplectic involutions can be classified by their invariant lattices and there are 75 isomorphic classes of such invariant lattices (\cite{Ni}), which will be called non-symplectic involution lattices.
%If two non-symplectic involutions $\rho_1, \rho_2$ on $K3$ surfaces $S_1, S_2$ respectively have the isomorphic invariant lattices, the pairs $(S_1, \rho_1)$, $(S_2, \rho_2)$ are known to be a deformation of each other.
For a  non-symplectic involution lattice $L$, a generic element of ${\K3}_L$ has a non-symplectic involution whose invariant lattice is $L$.

Fix a  non-symplectic involution lattice $L$ whose orthogonal complement $L^\bot$ in the $K3$ lattice has a decomposition
$$L^\bot \cong U \oplus L^\circ,$$
i.e.\ $(L, L^\circ)$ is a $K3$ mirror pair of lattices. Choose a generic $K3$ surface $S$ from ${\K3}_L$ that has a non-symplectic involution  $\rho$ whose invariant lattice is $L$.
The fixed locus of $\rho$ is a disjoint union of smooth curves.
Let $ \iota: \mbp^1 \ra \mbp^1$ be involution fixing two distinct
points. Let $V_\rho$ be the blow-up of the quotient $(S \times \mbp^1)/ (\rho, \iota)$  along its singular locus.
One can check that  $ V_\rho$ is smooth and it is a quasi-Fano threefold with anticanonical $K3$ fibration with generic fiber $D_{V_\rho}$ isomorphic to $S$ (see \S4 in \cite{KoLe} for the details of the construction). It is also easy to see
$$\Pic_{V_\rho}(D_{V_\rho}) =L.$$

It is known that $L^\circ$ is also a  non-symplectic involution lattice.
For a generic $K$3 surface $S^\circ \in {\K3}_{L^\circ}$ with a non-symplectic involution  $\rho^\circ$ whose invariant lattice is $L^\circ$, construct a quasi-Fano threefold $V_{\rho^\circ}$.
One can easily check that $ ( V_\rho,  V_{\rho^\circ})$ is a mirror pair of quasi-Fano threefolds (See \S4, \cite{KoLe} for the Hodge numbers of them).
It turns out that  the corresponding pair of Calabi--Yau threefolds
$$ \left( \Xi_{ V_\rho}, \Xi_{ V_{\rho^\circ}} \right )$$
is the famous Borcea--Voisin mirror pair of Calabi--Yau threefolds (\cite{Bo, Vo}).  This fact can  be explained as follows.

Choose a point $p \in \mbp^1$ such that $\iota(p) \neq p$.
Consider a degeneration of an elliptic curve $E$  to a normal crossing of two projective lines
$$\mathfrak Z_0 :=\mbp^1 \cup_{p, \iota(p)} \mbp^1,$$
made by attaching at two points $p, \iota(p)$ and denote the degeneration by $\mathfrak Z \ra B$.
 Consider an involution $\tilde \iota$ acting on $\mathfrak Z$ fiberwise which induces  the standard involution $\xi$, acting   on an elliptic fiber as the multiplication by  $-1$ and whose restriction to each component $\mbp^1$  is the involution $\iota$.
Let $\mathfrak X$ be the blow-up of  $(S\times \mathfrak Z ) / (\rho, \tilde \iota) $ along the singular locus. Then $\mathfrak X$ is smooth and the induced map $\mathfrak X \ra B$ is a degeneration of Calabi--Yau threefold $U_{\rho}$ to a $V_\rho \cup_D V_\rho$, where $U_\rho$ is the blow-up of \mbox{$(S \times E)/(\rho, \xi)$} along the singular locus.
Hence we conclude  that $U_{\rho}$ and $\Xi_{V_\rho}$ are of the same deformation type. Similarly
 $U_{\rho^\circ}$ and $\Xi_{V_{\rho^\circ}}$ are of the same deformation type.
 We note that $(U_\rho, U_{\rho^\circ})$ is the mirror pairs of Calabi--Yau threefolds, constructed in \cite{Bo, Vo}.

\begin{remark}
In Definition \ref{FTM}, we imposed the $K3$-mirror lattice condition, which is much stronger than (\ref{eqn2}).  On the other hand, in the proofs of Proposition \ref{dcmirrorp} and  upcoming theorems, only Equation (\ref{eqn2}) is used instead of fully utilizing the $K3$-mirror lattice condition. However we note that every example of mirror pairs of quasi-Fano threefolds, including ones from non-symplectic involutions on $K3$ surfaces, satisfies the $K3$-mirror lattice condition.
We expect that the $K3$-mirror lattice condition will play an important roll in   showing more delicate mirror relations than Hodge numbers relation (\ref{mirrorrel}) between mirror pairs of Calabi--Yau threefolds, constructed by smoothing normal crossings of quasi-Fano threefolds. 
Definition \ref{FTM} needs to be refined, and will be completed when the Landau–Ginzburg models of quasi-Fano theefolds are totally understood.
\end{remark}

\section{Mirror pairs of $d$-semistable Calabi--Yau threefolds of type II}  \label{sec5}
In previous sections, we considered smoothing of normal crossing varieties $X \cup_D X$ whose components are isomorphic. Now we generalize the construction for the case that components are not isomorphic.

Since a normal crossing variety, smoothable to Calabi--Yau manifolds, can be regarded as a member in a deformation
family of those Calabi--Yau manifolds,
 we call the
normal crossing variety
$$X_1 \cup X_2 \cup \cdots \cup X_r$$
 of dimension $n$ as a $d$-semistable Calabi--Yau $n$-fold
of type II if it has no triple locus, i.e.\ $ \bigcup_{i<j<k} X_i \cap X_j \cap X_k = \emptyset$.
This is a generalization of a notion for $K3$ surfaces (\cite{Fr, Ku}).
The normal crossing varieties we are considering are the simplest examples of $d$-semistable  Calabi--Yau manifolds of type II which are composed of two quasi-Fano manifolds and they have been
actively investigated in \cite{Lee, Ty}.
 Now we want to discuss mirror pairs of such Calabi--Yau threefolds.

Consider normal crossing varieties $\mathcal X = X_1 \cup X_2$ and  $\mathcal Y = Y_1 \cup Y_2$ of quasi-Fano threefolds, smoothable to Calabi--Yau threefolds $M_\mcX$ and $M_{\mathcal Y}$ respectively.
Suppose that $(X_i, Y_i)$ is a mirror pair of quasi-Fano threefolds for each $i=1, 2$ and
$M_\mcX$ and $M_{\mathcal Y}$ satisfy the mirror relation (\ref{mirrorrel}). Then one can show:
$$(\alpha_{X_1} +\alpha_{X_2} - \alpha_{\mcX}) + \alpha_{\mcY} =20$$
and
$$ (\alpha_{Y_1} +\alpha_{Y_2} - \alpha_{\mcY}) + \alpha_{\mcX} =20,$$
where
$$\alpha_\mcX = \rk \left ( \Pic_{X_1}(D_\mcX) + \Pic_{X_2}(D_\mcX) \right), \alpha_\mcY = \rk \left ( \Pic_{Y_1}(D_\mcY) + \Pic_{Y_2}(D_\mcY) \right)$$
with $D_\mcX = X_1 \cap X_2$, $D_\mcY = Y_1 \cap Y_2$.
Noting
$$\alpha_{X_1} +\alpha_{X_2} - \alpha_{\mcX} =  \rk \left ( \Pic_{X_1}(D_\mcX) \cap \Pic_{X_2}(D_\mcX) \right)$$
and
$$\alpha_{Y_1} +\alpha_{Y_2} - \alpha_{\mcY} =  \rk \left ( \Pic_{Y_1}(D_\mcY) \cap \Pic_{Y_2}(D_\mcY) \right),$$
 we give the following  definition.

\begin{definition} \label{mirrorII3}
Suppose that $d$-semistable  Calabi--Yau threefolds $\mathcal X = X_1 \cup X_2$,  $\mathcal Y = Y_1 \cup Y_2$ of type II satisfy
\begin{enumerate}
\item  $(X_i, Y_i)$ is a mirror pair of quasi-Fano threefolds  such that $D_\mcX=X_1\cap X_2$, $D_\mcY=Y_1\cap Y_2$  are anticanonical sections of $X_i$,  $Y_i$ respectively for each $i=1, 2$.
\item The pairs of lattices
$$\left(\Pic_{X_1}(D_\mcX) + \Pic_{X_2}(D_\mcX), \Pic_{Y_1}(D_\mcY) \cap \Pic_{Y_2}(D_\mcY))\right),$$
$$\left(\Pic_{X_1}(D_\mcX) \cap \Pic_{X_2}(D_\mcX)), \Pic_{Y_1}(D_\mcY) + \Pic_{Y_2}(D_\mcY) \right)$$
are $K3$-mirror pairs.
\end{enumerate}
Then the pair ($\mathcal X, \mathcal Y$) is called a mirror pair of  $d$-semistable Calabi--Yau threefolds of type II.
\end{definition}

Note that the pair $(X_{\Delta} \cup_D X_{\Delta}, Y_{\Delta^\circ} \cup_D Y_{\Delta^\circ})$, constructed in Sections \ref{sec2-2} and \ref{sec3-2}, is a mirror pair of $d$-semistable Calabi--Yau threefolds of type II in this definition.
%The following theorem also justifies Definition \ref{mirrorII3}.

\begin{theorem} \label{3dimmirrort}
For normal crossing varieties $\mathcal X = X_1 \cup X_2$,  $\mathcal Y = Y_1 \cup Y_2$, if  ($\mathcal X, \mathcal Y$) is a mirror pair of  $d$-semistable Calabi--Yau threefolds of type II, then the Calabi--Yau threefolds $M_{\mathcal X}, M_{\mathcal Y}$ satisfy
$$h^{1,1}(M_{\mathcal X}) = h^{1,2}(M_{\mathcal Y}), h^{1,2}(M_{\mathcal X}) = h^{1,1}(M_{\mathcal Y}).$$
\end{theorem}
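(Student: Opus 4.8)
The plan is to reduce Theorem \ref{3dimmirrort} to the Hodge number formula (\ref{hodgenumberq}) for the smoothing of a normal crossing variety of quasi-Fano threefolds, together with the numerical relations coming from Definition \ref{mirrorII3} and the mirror-pair relations for the $(X_i,Y_i)$. First I would write out (\ref{hodgenumberq}) for both $M_\mcX$ and $M_\mcY$:
\begin{align*}
h^{1,1}(M_\mcX) &= h^{1,1}(X_1)+h^{1,1}(X_2)-\alpha_\mcX-1,\\
h^{1,2}(M_\mcX) &= 21+h^{1,2}(X_1)+h^{1,2}(X_2)-\alpha_\mcX,
\end{align*}
and the analogous pair for $M_\mcY$ with the $Y_i$ and $\alpha_\mcY$. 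Since $(X_i,Y_i)$ is a mirror pair of quasi-Fano threefolds, Definition \ref{FTM} gives $\alpha_{X_i}=h^2(X_i)-h^{1,2}(Y_i)-1$ and $\alpha_{Y_i}=h^2(Y_i)-h^{1,2}(X_i)-1$ for $i=1,2$.

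Next I would extract the two key lattice identities from condition (2) of Definition \ref{mirrorII3}. The $K3$-mirror pair relation says $\rk L+\rk L^\circ=20$, so
$$\rk\bigl(\Pic_{X_1}(D_\mcX)+\Pic_{X_2}(D_\mcX)\bigr)+\rk\bigl(\Pic_{Y_1}(D_\mcY)\cap\Pic_{Y_2}(D_\mcY)\bigr)=20,$$
$$\rk\bigl(\Pic_{X_1}(D_\mcX)\cap\Pic_{X_2}(D_\mcX)\bigr)+\rk\bigl(\Pic_{Y_1}(D_\mcY)+\Pic_{Y_2}(D_\mcY)\bigr)=20.$$
Using the inclusion--exclusion identities stated just before Definition \ref{mirrorII3}, namely $\alpha_{X_1}+\alpha_{X_2}-\alpha_\mcX=\rk(\Pic_{X_1}(D_\mcX)\cap\Pic_{X_2}(D_\mcX))$ and similarly for $Y$, together with $\alpha_\mcX=\rk(\Pic_{X_1}(D_\mcX)+\Pic_{X_2}(D_\mcX))$, these become
$$\alpha_\mcX+(\alpha_{Y_1}+\alpha_{Y_2}-\alpha_\mcY)=20,\qquad (\alpha_{X_1}+\alpha_{X_2}-\alpha_\mcX)+\alpha_\mcY=20,$$
which are exactly the two displayed equations preceding Definition \ref{mirrorII3}. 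I would then substitute the Definition \ref{FTM} expressions for $\alpha_{X_i},\alpha_{Y_i}$ into these, obtaining two linear relations among $\alpha_\mcX,\alpha_\mcY$ and the Hodge numbers $h^2(X_i),h^2(Y_i),h^{1,2}(X_i),h^{1,2}(Y_i)$.

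Finally I would compute $h^{1,1}(M_\mcX)-h^{1,2}(M_\mcY)$ and $h^{1,2}(M_\mcX)-h^{1,1}(M_\mcY)$ directly. Plugging in (\ref{hodgenumberq}) and writing $h^{1,1}(X_i)=h^2(X_i)$ (quasi-Fano threefolds have $h^{2,0}=0$), the first difference is
$$\bigl(h^2(X_1)+h^2(X_2)-\alpha_\mcX-1\bigr)-\bigl(21+h^{1,2}(Y_1)+h^{1,2}(Y_2)-\alpha_\mcY\bigr),$$
and using $\alpha_{Y_i}=h^2(Y_i)-h^{1,2}(X_i)-1$ is the wrong substitution here — rather I would use $h^2(X_i)=\alpha_{X_i}+h^{1,2}(Y_i)+1$ to turn $h^2(X_1)+h^2(X_2)$ into $\alpha_{X_1}+\alpha_{X_2}+h^{1,2}(Y_1)+h^{1,2}(Y_2)+2$, so the difference collapses to $(\alpha_{X_1}+\alpha_{X_2}-\alpha_\mcX)+\alpha_\mcY-20$, which vanishes by the second relation above. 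The computation of $h^{1,2}(M_\mcX)-h^{1,1}(M_\mcY)$ is symmetric: use $h^3(X_i)$ and the Euler-characteristic relation (\ref{cyeulerrel}) (or directly $h^3(X_i)=2h^{1,2}(X_i)$ since $h^{3,0}=0$ for quasi-Fano — wait, quasi-Fano threefolds need not have $h^{3,0}=0$ automatically, but $h^i(X,\mco_X)=0$ for $i>0$ gives $h^{3,0}=h^0(X,K_X)=h^3(X,\mco_X)=0$, so indeed $h^3(X_i)=2h^{1,2}(X_i)$), and the first relation $\alpha_\mcX+(\alpha_{Y_1}+\alpha_{Y_2}-\alpha_\mcY)=20$ to finish. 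The only genuinely delicate point is the bookkeeping of the four $\alpha$'s and making sure the inclusion--exclusion identities for the lattices $\Pic_{X_i}(D_\mcX)$ inside $H^2(D_\mcX,\mbz)$ are applied with the correct signs; everything else is substitution. I would also remark that, as noted in the Remark following Proposition \ref{dcmirrorp}, only the rank relation (\ref{eqn2}) (applied to each pair and to the intersection/sum lattices) is actually needed, not the full $K3$-mirror lattice structure.
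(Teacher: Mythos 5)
Your proposal is correct and follows essentially the same route as the paper's proof: apply the smoothing formula (\ref{hodgenumberq}) to both $M_\mcX$ and $M_\mcY$, convert condition (2) of Definition \ref{mirrorII3} into the two rank relations $(\alpha_{X_1}+\alpha_{X_2}-\alpha_\mcX)+\alpha_\mcY=20$ and $(\alpha_{Y_1}+\alpha_{Y_2}-\alpha_\mcY)+\alpha_\mcX=20$, and eliminate $h^2(X_i)$, $h^2(Y_i)$ via the Definition \ref{FTM} identities $h^2(X_i)=\alpha_{X_i}+h^{1,2}(Y_i)+1$, $h^2(Y_i)=\alpha_{Y_i}+h^{1,2}(X_i)+1$. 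The detour through $h^3(X_i)$ and (\ref{cyeulerrel}) is unnecessary (the formula (\ref{hodgenumberq}) already expresses $h^{1,2}(M_\mcX)$ through $h^{1,2}(X_i)$), but this does not affect correctness.
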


\begin{proof}
\begin{align*}
 h^{1, 1} (M_\mcX) &= h^{1,1} (X_1) + h^{1,1} (X_2) - \alpha_\mcX -1\\
 &= h^{1,1} (X_1) + h^{1,1} (X_2) -\alpha_{X_1} -\alpha_{X_2}  - \alpha_{\mcY} +20 -1\\
 &= -h^{1,2} (Y_1) - h^{1,2} (Y_2)  - \alpha_{\mcY} +21\\
 &=h^{1, 2} (M_\mcY).
\end{align*}
Similarly we can get the second equation.
\end{proof}

The following immediate corollary of this theorem is very useful for generating many examples of mirror pairs of  $d$-semistable Calabi--Yau threefolds of type II.
\begin{corollary}
Let $\mathcal X = X_1 \cup X_2$,  $\mathcal Y = Y_1 \cup Y_2$ be $d$-semistable  Calabi--Yau threefolds of type II such that
\begin{enumerate}
\item   $(X_i, Y_i)$ is a mirror pair of quasi-Fano threefolds   for each $i=1, 2$,
\item
$\Pic_{X_1}(D_\mcX)$,  $\Pic_{Y_2}(D_\mcY)$ are sublattices of    $\Pic_{X_2}(D_\mcX)$, $\Pic_{Y_1}(D_\mcY)$ respectively.
\end{enumerate}
Then the pair ($\mathcal X, \mathcal Y$) is a mirror pair of  $d$-semistable Calabi--Yau threefolds of type II.
\end{corollary}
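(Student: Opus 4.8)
The plan is to check directly that the pair $(\mcX,\mcY)$ satisfies the two conditions of Definition \ref{mirrorII3}; the statement is meant to be a bookkeeping consequence of that definition together with the elementary lattice identities $A+B=B$ and $A\cap B=A$, valid whenever $A\subset B$. Condition (1) of Definition \ref{mirrorII3} is essentially hypothesis (1) of the corollary: because $\mcX=X_1\cup X_2$ and $\mcY=Y_1\cup Y_2$ are Calabi--Yau threefolds of type II with two components, the double loci $D_\mcX=X_1\cap X_2$ and $D_\mcY=Y_1\cap Y_2$ are anticanonical sections of both of their components (this is what makes the smoothing theorem of Kawamata--Namikawa and the Hodge-number formula (\ref{hodgenumberq}) applicable), and by assumption each $(X_i,Y_i)$ is a mirror pair of quasi-Fano threefolds. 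So the only real work is condition (2): exhibiting the two indicated pairs of lattices as $K3$-mirror pairs.

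First I would use hypothesis (2) to simplify. From $\Pic_{X_1}(D_\mcX)\subset\Pic_{X_2}(D_\mcX)$ and $\Pic_{Y_2}(D_\mcY)\subset\Pic_{Y_1}(D_\mcY)$ we get
$$\Pic_{X_1}(D_\mcX)+\Pic_{X_2}(D_\mcX)=\Pic_{X_2}(D_\mcX),\qquad \Pic_{Y_1}(D_\mcY)\cap\Pic_{Y_2}(D_\mcY)=\Pic_{Y_2}(D_\mcY),$$
and dually
$$\Pic_{X_1}(D_\mcX)\cap\Pic_{X_2}(D_\mcX)=\Pic_{X_1}(D_\mcX),\qquad \Pic_{Y_1}(D_\mcY)+\Pic_{Y_2}(D_\mcY)=\Pic_{Y_1}(D_\mcY).$$
Consequently the first pair in Definition \ref{mirrorII3}(2) becomes $\bigl(\Pic_{X_2}(D_\mcX),\Pic_{Y_2}(D_\mcY)\bigr)$ and the second becomes $\bigl(\Pic_{X_1}(D_\mcX),\Pic_{Y_1}(D_\mcY)\bigr)$. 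Note how the opposite index conventions in hypothesis (2) --- $\Pic_{X_1}(D_\mcX)\subset\Pic_{X_2}(D_\mcX)$ on the $X$-side but $\Pic_{Y_2}(D_\mcY)\subset\Pic_{Y_1}(D_\mcY)$ on the $Y$-side --- are exactly what is needed for the two simplifications to pair the larger lattice of one Calabi--Yau with the smaller lattice of the other, as a $K3$-mirror correspondence must. Each of the two resulting pairs is then $K3$-mirror by Definition \ref{FTM} applied to the mirror pair of quasi-Fano threefolds $(X_i,Y_i)$, whose anticanonical polarization lattices $\Pic_{X_i}(D_{X_i})$, $\Pic_{Y_i}(D_{Y_i})$ we are identifying with $\Pic_{X_i}(D_\mcX)$, $\Pic_{Y_i}(D_\mcY)$. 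This verifies condition (2), hence Definition \ref{mirrorII3}, and Theorem \ref{3dimmirrort} then delivers the Hodge-number mirror relation (\ref{mirrorrel}) for $M_\mcX$, $M_\mcY$, which is the point of the corollary.

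The one place I would spell things out carefully is this last identification: $\Pic_{X_i}(D_\mcX)=i_*\Pic(X_i)$ is a priori defined using the specific section $D_\mcX$, whereas Definition \ref{FTM} refers to a \emph{generic} smooth anticanonical section. Since $i_*\Pic(X_i)$ is the image of the fixed lattice $\Pic(X_i)$ under restriction and is locally constant along the anticanonical pencil $\overline W_{X_i}$, it suffices that $D_\mcX$ be chosen sufficiently general in $|-K_{X_i}|$ simultaneously for $i=1,2$ --- which is already implicit in asking that $D_\mcX$ witness the mirror-pair structure of both $(X_1,Y_1)$ and $(X_2,Y_2)$, exactly as in the construction of $X_\Delta\cup_D X_\Delta$ in Section \ref{sec3}. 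Apart from pinning down this compatibility, there is no genuine obstacle: the corollary follows formally from the definitions.
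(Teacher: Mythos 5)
Your verification is correct and is exactly the argument the paper intends: the paper states this as an immediate corollary, and the content is precisely that the inclusions $\Pic_{X_1}(D_\mcX)\subset\Pic_{X_2}(D_\mcX)$ and $\Pic_{Y_2}(D_\mcY)\subset\Pic_{Y_1}(D_\mcY)$ collapse the sum/intersection lattices in Definition \ref{mirrorII3}(2) to $\left(\Pic_{X_2}(D_\mcX),\Pic_{Y_2}(D_\mcY)\right)$ and $\left(\Pic_{X_1}(D_\mcX),\Pic_{Y_1}(D_\mcY)\right)$, which are $K3$-mirror pairs by Definition \ref{FTM} applied to $(X_2,Y_2)$ and $(X_1,Y_1)$. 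Your added remark about identifying $\Pic_{X_i}(D_\mcX)$ with the lattice of a generic anticanonical section is a reasonable point of care that the paper leaves implicit, and it does not change the argument.
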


For a fixed reflexive 3-polytope $\Delta$, consider $X_\Delta$, $Y_\Delta$.
From their construction, we can choose $D_{X_\Delta}$, $D_{Y_\Delta}$ so that $D_{X_\Delta} \simeq D_{Y_\Delta}$ (to be denoted by $D_\Delta$). Hence  we can make a $d$- semistable normal crossing variety
$$\mcZ_\Delta = X_\Delta \cup_D Y_\Delta$$ \label{zdelta}
by gluing along  $D_{\Delta}$.
%Since  $\Pic_{Y_\Delta}(D_{Y_\Delta})=\Pic(D_{Y_\Delta})$,
%$\Pic_{X_\Delta}(D_{X_\Delta})$ can be regarded as a sublattice of $\Pic_{Y_\Delta}(D_{Y_\Delta})$.

To ensure the smoothability of
$\mcZ_\Delta $ to a Calabi--Yau threefold, we need to show that it is projective, i.e.\
we need to find some ample divisors $H_{X_\Delta}, H_{Y_\Delta}$ of $X_\Delta$, $Y_\Delta$ respectively such that
$H_{X_\Delta}|_{D_\Delta}, H_{Y_\Delta}|_{D_\Delta}$ are linearly equivalent.
Note that
$\pi_X:X_\Delta  \ra X(\Sigma_\Delta)$ is a blow-up along a smooth curve $c \in | -K_{X(\Sigma_\Delta)}|$  and
$\pi_Y:Y_\Delta  \ra X(\Sigma_\Delta)$ is a sequential blow-up along a smooth curve $c_1,c_2, \cdots, c_k$ such that
$$\sum_{v \in \partial \Delta \cap \mbz^3} D_v \cap D_{X({\Sigma_{\Delta}})}=c_1+ c_2+ \cdots + c_k.$$
Let $\Delta[0] =\{v_1, v_2, \cdots, v_l\}$   and  $\gamma_i = D_{X(\Sigma_\Delta)} \cap D_{v_i}$.
Let $\{v'_1, \cdots, v'_m \}$ be the set of all the integral points that lie on the relative interiors of some edges of $\Delta$, then
$$\ D_{X(\Sigma_\Delta)} \cap D_{v'_i} = \epsilon_{i1} + \cdots + \epsilon_{i a_i},$$
 where $\epsilon_{i1}, \cdots, \epsilon_{i a_i}$ are disjoint smooth  rational curves and $a_i = l^*(\theta^\circ)$ for $v'_i \in \theta$.

Then
$$\{c_1,c_2, \cdots, c_k \} = \{\gamma_1, \gamma_2, \cdots, \gamma_l   \} \cup \bigcup_{1\leq i \leq m} \{\epsilon_{i1}, \cdots, \epsilon_{i a_i} \}.$$
Let $E_i$, $F_{ij}$ be the exceptional divisors over $\gamma_i, \epsilon_{ij}$ in the sequential blow-up $\pi_Y:Y_\Delta  \ra X(\Sigma_\Delta)$ respectively.
For an ample divisor $H$ of $X(\Sigma_\Delta)$, there are some positive integers $b_i$'s, $d_{ij}$'s such that
the divisor
$$H_1:= N \pi_Y^*(H) - \sum_i b_i E_i - \sum_{i,j}d_{ij}F_{ij} $$
is ample on $Y_\Delta$ for sufficiently large $N$.
The point here is that we can assume that $d_{i1}=\cdots = d_{i a_i}$ ($= d_i$) since  the curve $\epsilon_{i1}, \cdots, \epsilon_{i a_i}$ are all disjoint.
Let $E$ be the exceptional divisor of the blow-up $\pi_X:X_\Delta  \ra X(\Sigma_\Delta)$.
Then the divisor
$$H_2:=N \pi_X^*(H) - \sum_i (b_i -1)\pi_X^*(D_{v_i}) - \sum_i (d_i-1) \pi_X^*(D_{v'_i})-E$$
is ample on $X_\Delta$ for sufficiently large $N$.
It is trivial to check that $H_{X_\Delta}|_{D_\Delta}, H_{Y_\Delta}|_{D_\Delta}$ are linearly equivalent.
So we proved that $\mcZ_\Delta = X_\Delta \cup_D Y_\Delta$ is projective.
Similarly $\mcZ_{\Delta^\circ} =Y_{\Delta^\circ} \cup_D X_{\Delta^\circ}$  is also $d$-semistable and projective.

Note that
$$(X_\Delta, Y_{\Delta^\circ} ), (Y_\Delta, X_{\Delta^\circ})$$
are mirror pairs of quasi-Fano threefolds and
\cen{$\Pic_{X_\Delta}(D_{X_\Delta}) \subset \Pic_{{Y_\Delta}}(D_{Y_\Delta})$, $\Pic_{{X_{\Delta^\circ}}}(D_{X_{\Delta^\circ}}) \subset \Pic_{{Y_{\Delta^\circ}}}(D_{Y_{\Delta^\circ}})$.}
Therefore $(\mcZ_\Delta, \mcZ_{\Delta^\circ})$ is a mirror pair of   $d$-semistable  Calabi--Yau threefolds of type II.
If $\Delta$ is self-dual, i.e.\ the polytopes $\Delta, \Delta^\circ$   are equivalent, then
$\mcZ_\Delta = \mcZ_{\Delta^\circ}$. So $\mcZ_\Delta$ (also $M_{\mcZ_\Delta}$) is a self-mirror.
There are 79 self-dual 3-polytopes among 4319 equivalence classes of reflexive 3-polytopes.
Hence this construction gives us 2199 mirror pairs of Calabi--Yau threefolds, including 79 self-mirrors.
Those are listed in Table 2 of \cite{Lee2} and the following are their Hodge numbers.
\begin{align*}
h^{1,1}(M_{\mcZ_\Delta}) = h^{1,2}(M_{\mcZ_{\Delta^\circ}}) &= l(\Delta)
       +| \Delta[0]| +\sum_{\theta \in {\Delta}[1] } l^*(\theta)  + \sum_{ \Gamma \in {\Delta}[2]} l^*(\Gamma )  -4, \\
\end{align*}
\begin{align*}
h^{1,2}(M_{\mcZ_\Delta}) = h^{1,1}(M_{\mcZ_{\Delta^\circ}} )   &=  23 - l( \Delta ) + l(\Delta^\circ) + \sum_{v \in {\Delta}^{[0]}} l^*(v^\circ)   +\sum_{ \Gamma \in {\Delta}[2]} l^*(\Gamma ) -  \sum_{\theta \in {\Delta}[1] } l^* (\theta^\circ)l^*(\theta). \\
\end{align*}

The polytope $\Delta$ in Example \ref{exam1} gives rise to the pair $(M_{\mcZ_\Delta}, M_{\mcZ_{\Delta^\circ}})$ of Calabi--Yau threefolds with
$$h^{1,1}(M_{\mcZ_\Delta}) = h^{1,2}(M_{\mcZ_{\Delta^\circ}})  = 13, h^{1,2}(M_{\mcZ_\Delta}) = h^{1,1}(M_{\mcZ_{\Delta^\circ}} ) = 37.$$

Note that there are some  multiplicities of mirror pairs in the Batyrev mirror construction in \cite{Ba}, due to the fact that desingularizations of Calabi--Yau hypersurfaces in Gorenstein toric Fano fourfolds may not be unique. There also have been found  multiplicities to some of Borcea--Voisin  mirror pairs (\cite{ABS}).
There are similar multiplicities in the mirror construction of this paper.

In the construction of quasi-Fano threefolds $X_\Delta, Y_\Delta$ which are blow-ups of $X(\Sigma_\Delta$), there may be more than one choice of maximal projective triangulations of $\partial \Delta$.
This leads to a multiplicity of mirror pairs of quasi-Fano threefolds.
Furthermore recall that we built  $Y_\Delta$ by sequentially blowing up $X(\Sigma_\Delta)$ along curves $c_1, c_2, \cdots, c_k$.
The sequential blow-up depends on the order of blow-ups in general (see comments after Remark 8.3 in \cite{Lee}) and so this also gives another multiplicity of mirror pairs.
These cause the multiplicities in the construction of mirror pairs, $\left(\Xi_{X_\Delta}, \Xi_{Y_{\Delta^\circ}} \right)$,
$\left( M_{\mcZ_\Delta}, M_{\mcZ_{\Delta^\circ}} \right)$.

\section{Are they new?} \label{sec6}

Our examples in Table 1, 2 of \cite{Lee2} are constructed by smoothing method while  Calabi--Yau threefolds in \cite{AlGrHe2, KrSk2}, which are the largest source of known examples of Calabi--Yau threefolds,  come as desingularizations of anticanonical sections of Gorenstein toric Fano fourfolds.
A simple way of distinguishing our examples from them is to compare the Hodge numbers but most of their Hodge numbers overlap.
That's probably because there are so many such examples from toric Fano fourfolds.
However one still could suspect that  the mirror pairs in this paper may overlap with  those from toric fourfolds.
In this section, we pick up a particular Calabi--Yau threefold from our list and  explicitly show that this  Calabi--Yau threefold is not homeomorphic to any of those from toric fourfolds.
The Calabi--Yau threefold which we pick up is $\Xi_{X_\Delta}$, where $X(\Sigma_\Delta) = \mbp^3$.
Its Hodge numbers are
$$h^{1,1} = 2, h^{1,2} = 86.$$
There are exactly ten  different Calabi--Yau threefolds with these Hodge numbers that are desingularizations of anticanonical sections of Gorenstein toric Fano fourfolds (\cite{Ba, KrSk2}).
Those are constructed from nine reflexive 4-polytopes  --- one of the polytopes gives rise to two non-homeomorphic Calabi--Yau threefolds.

For a compact threefold $M$ with $h^2(M)=2$ and the second Chern class $c_2(M)$  that is not zero in $H^4(M, \mbz)_f$, where $A_f = A/A_t$  for an Abelian group $A$ with its torsion part $A_t$, we will define a topological invariant $\lambda$ as follows.
Note that the subgroup
 $$ \{ l \in H^2(M, \mbz)_f | c_2(M) \cdot l =0 \} $$
of $H^2(M, \mbz)_f$ is generated by a single element $m$. Then the number
$$\lambda(M) := |m^3|$$
is a topological invariant of $M$.
Firstly we calculate $\lambda(\Xi_{X_\Delta})$. $\Xi_{X_\Delta}$ is a smoothing of normal crossing variety $\mathcal X = X_1 \cup X_2$, where
$$\pi_i:X_i \ra X(\Sigma_\Delta)$$
is a blow-up of $X(\Sigma_\Delta) = \mbp^3$ along a smooth curve $c \in |-K_{\mbp^3}|_{S}|$ and $D=X_1 \cap X_2$ is the proper transform in $X_i$ of $S$ for $i=1,2$, where $S$ is a smooth quartic surface  of $\mbp^3$.
Let
$$G^k(\mathcal X, \mbz) = \ker(H^k(X_1, \mbz) \oplus H^k(X_2, \mbz)   \ra H^k(D, \mbz )),$$
where the map $H^k(X_1, \mbz) \oplus H^k(X_2, \mbz)   \ra H^k(D, \mbz )$ is given by
$$(l_1, l_2) \mapsto l_1|_D - l_2|_D.$$
Note that $G^k(\mathcal X, \mbz)$ inherits the cup product from those of \mbox{$H^k(X_1, \mbz)$, $H^k(X_2, \mbz)$} with the mixed term set to be zero (see \S4,  \cite{Lee}).
Let
\cen{$h_1 = (\pi_1^*(H), \pi_2^*(H)), h_2 = (4\pi_1^*(H)-E_1,0)$,}
 where $H$ is a hyperplane section of $\mbp^3$ and $E_i$ be the exceptional divisor of the blow-up $\pi_i$.
Then it is easy to check that $h_1$, $h_2$ belong to $G^2(\mathcal X, \mbz)$.
Note
$$\{\pi_1^*(H),  4\pi_1^*(H)-E_1\}$$
 is a basis for the lattice $H^2(X_1, \mbz)_f$. By Poincar\'e duality, there are classes $l_1, l_2 \in H^4(X_1, \mbz)$ such that the cup product matrix of
 \cen{$\{\pi_1^*(H),  4\pi_1^*(H)-E_1\}$ and $\{l_1, l_2 \}$}
  is the $2 \times 2$ identity matrix.
Let
$$h'_1 = (l_1, 0), h'_2=(l_2, f),$$
 where $f$ is a fiber over a point on the blow-up center  under $\pi_2$.
It is not hard to see that $h'_1$, $h'_2$ belong to $G^4(\mathcal X, \mbz)$.
Now the cup product matrix of $\{h_1, h_2\}$ and $\{h'_1, h'_2\}$ is the $2 \times 2$ identity matrix, which is unimodular.
According to \cite{Lee}, this property guarantees that there is an isomorphism $\phi$ from the sublattice $\langle h_1, h_2 \rangle$ of $G^2(\mathcal X, \mbz)$ to $H^2(M_{\mathcal X}, \mbz)_f$ with the cup product preserved. Let $c_2 = (c_2(X_1), c_2(X_2))$, then $c_2$ belongs to $G^4(\mathcal X, \mbz)$ (\S7, \cite{Lee}) and
$$c_2 \cdot h = c_2( M_{\mathcal X}) \cdot \phi(h)$$
for any $h \in \langle h_1, h_2 \rangle $.
Now we can calculate $\lambda(M_{\mathcal X})$.
Note
$$c_2 \cdot h_1 = 44, c_2 \cdot h_2 = 24,$$
So the group
$$\{ h \in \langle h_1, h_2 \rangle  |c_2 \cdot h=0\}$$
is generated by $6h_1 -11 h_2$. Hence
$$\lambda(\Xi_{X_\Delta})=\lambda(M_{\mathcal X}) = |(6h_1 -11 h_2)^3| =4320.$$

Next we need to find out the  $\lambda$-invariants of those ten Calabi--Yau threefolds.
Since those threefolds are hypersurfaces in toric varieties, it is a routine job to calculate the cubic forms on the second integral cohomology groups and the product with the second Chern class. Those calculations are provided in  a data base (\cite{AlGrHe, AlGrHe2}). Using this data base, we calculate  $\lambda$-invariants of those ten Calabi--Yau threefolds in Table \ref{lambda}, where `ID \#' is the polytope number in \cite {AlGrHe2}.
One can find out the vertex coordinates of the  corresponding 4-polytopes in \cite{AlGrHe2} with those polytope ID \#'s.

\begin{table}[h]
\begin{center}
\caption{$\lambda$-invariants of Calabi--Yau threefolds with
$$h^{1,1}=2, h^{1,2} =86$$
 in Gorenstein toric Fano fourfolds } \label{lambda}
\begin{tabular}{|c|c|c|c|c|c|c|c|c|c|c|}
\hline
ID \# & 12&13&13&14&15&16&17&18&19&20 \\
\hline
$\lambda(M)$& 1404&108&1564&3456&17280&17946&137214&67230&258198&457050 \\
\hline
\end{tabular}
\end{center}
\end{table}

Since all the $\lambda$-invariants are different from
$\lambda(\Xi_{X_\Delta}) =4320$, the Calabi--Yau threefold $\Xi_{X_\Delta}$ is not homeomorphic to a desingularization of an anticanonical section of any  Gorenstein toric Fano fourfold.

\section{Higher dimensional cases } \label{sec7}

% %Can use nef-partitions for further examples.x

%For dimension one, $\mbp^1$ is the only quasi-Fano curve.
%For dimension two, a qauasi-Fano surface with ellitic fibration is a rational surface that is a blow-up of $\mbp^2$ at nine points.
For higher dimensional cases,  we give a definition, taking  the mirror relation (\ref{hypermirr}) for anticanonical sections  into account.
\begin{definition} \label{higherdefqm}
A pair $(X, Y)$ of quasi-Fano manifolds of dimension higher than three that have anticanonical Calabi--Yau fibrations
$$\overline W_X : X \ra \mbp^1, \overline W_Y : Y \ra \mbp^1,$$
 is called a mirror pair if
the pairs
$$(X^*, Y), (Y^*, X)$$
satisfy (\ref{koneq1}), (\ref{koneq2}) respectively and
$(D_X, D_Y)$ is a mirror pair of Calabi--Yau manifolds, where $D_X, D_Y$ are generic smooth anticanonical sections of $X, Y$ respectively.
\end{definition}
We also generalize Definition \ref{mirrorII3} for higher dimensions.

\begin{definition}
Let $\mathcal X = X_1 \cup X_2$,  $\mathcal Y = Y_1 \cup Y_2$  be $d$-semistable  Calabi--Yau manifolds of type II with dimension  higher than three.
If  $(X_i, Y_i)$ is a mirror pair of quasi-Fano manifolds such that $D_\mcX=X_1\cup X_2$, $D_\mcY=Y_1\cup Y_2$  are  anticanonical sections of $X_i, Y_i$ respectively   for each $i=1, 2$.
Then the pair ($\mathcal X, \mathcal Y$) is called a mirror pair of  $d$-semistable Calabi--Yau manifolds of type II.
\end{definition}
We have a higher dimensional apology of Theorem \ref{3dimmirrort} (see also Theorem 2.3, \cite{DoHaTh}).
\begin{proposition}For normal crossing varieties $\mathcal X = X_1 \cup X_2$,  $\mathcal Y = Y_1 \cup Y_2$ of dimension $n$, if  ($\mathcal X, \mathcal Y$) is a mirror pair of $d$-semistable Calabi--Yau manifolds of type II, then
$$\chi(M_\mcX) = (-1)^{n} \chi(M_{\mathcal Y}).$$
\end{proposition}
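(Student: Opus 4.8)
The plan is to reduce the assertion to a bookkeeping of Euler characteristics, in the same spirit as the identities derived in Section~\ref{sec3}. Concretely, I would first record a topological formula for $\chi(M_\mcX)$ in terms of the components and the double locus, valid for any type II smoothing, and then feed in the Euler-characteristic consequences of the quasi-Fano mirror conditions of Definition~\ref{higherdefqm} (for $n=3$ these are already contained in the analysis preceding Theorem~\ref{3dimmirrort}).

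\emph{Step 1: Euler characteristic of a type II smoothing.} Since $\mcX = X_1 \cup X_2$ has no triple locus, its singular locus is the smooth variety $D_\mcX = X_1 \cap X_2$, and along it a semistable smoothing has the local model $\{xy = t\}$ in the normal directions. The Clemens collapse map $c\colon M_\mcX \to \mcX$ is therefore a homeomorphism over $\mcX \setminus D_\mcX$ and has fibre $S^1$ over each point of $D_\mcX$; since $\chi(S^1)=0$ and $\chi(\mcX) = \chi(X_1)+\chi(X_2)-\chi(D_\mcX)$, the stratification formula for $\chi$ gives
$$\chi(M_\mcX) = \chi(\mcX \setminus D_\mcX) = \chi(X_1) + \chi(X_2) - 2\,\chi(D_\mcX),$$
and similarly $\chi(M_\mcY) = \chi(Y_1) + \chi(Y_2) - 2\,\chi(D_\mcY)$ (cf.\ Theorem~2.3 of \cite{DoHaTh}).

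\emph{Step 2: the mirror conditions in terms of Euler characteristics.} Fix $i$. Taking the alternating sum over cohomological degree in (\ref{koneq1}) for $(X_i^*, Y_i)$ and using $(-1)^{p-q}=(-1)^{p+q}$ collapses it to $\chi(X_i^*, W_{X_i}^{-1}(t)) = (-1)^n\chi(Y_i)$, and likewise (\ref{koneq2}) gives $\chi(Y_i^*, W_{Y_i}^{-1}(t)) = (-1)^n\chi(X_i)$. Combining with the long exact sequences of the pairs $(X_i^*, W_{X_i}^{-1}(t))$, $(Y_i^*, W_{Y_i}^{-1}(t))$, with the additive decompositions $\chi(X_i)=\chi(X_i^*)+\chi(D_{X_i})$, $\chi(Y_i)=\chi(Y_i^*)+\chi(D_{Y_i})$, and with the fact that a generic anticanonical fibre is deformation equivalent to $D_{X_i}$ — so $\chi(W_{X_i}^{-1}(t))=\chi(D_{X_i})=\chi(D_\mcX)$ and symmetrically $\chi(W_{Y_i}^{-1}(t))=\chi(D_{Y_i})=\chi(D_\mcY)$ — exactly as in Section~\ref{sec3}, I obtain
$$\chi(X_i) = (-1)^n\chi(Y_i) + 2\,\chi(D_\mcX), \qquad \chi(Y_i) = (-1)^n\chi(X_i) + 2\,\chi(D_\mcY).$$
Substituting one into the other forces $\chi(D_\mcX) = (-1)^{n-1}\chi(D_\mcY)$, which is just relation (\ref{hypermirr}) (and is also part of the hypothesis that $(D_{X_i},D_{Y_i})$ is a mirror pair of Calabi--Yau manifolds).

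\emph{Step 3: conclusion.} Inserting the first identity of Step 2 for $i=1,2$ into Step 1 gives $\chi(M_\mcX) = (-1)^n\bigl(\chi(Y_1)+\chi(Y_2)\bigr) + 2\,\chi(D_\mcX)$, whereas $\chi(D_\mcY) = (-1)^{n-1}\chi(D_\mcX)$ turns the formula for $\chi(M_\mcY)$ into $\chi(M_\mcY) = \chi(Y_1)+\chi(Y_2) + 2(-1)^n\chi(D_\mcX)$; multiplying by $(-1)^n$ recovers $\chi(M_\mcX)$. I expect the only genuine obstacle to be Step~1, i.e.\ checking that $d$-semistability together with the type II hypothesis really forces the local product form $\{xy=t\}$ transverse to $D_\mcX$, so that the Clemens collapse has circle fibres over $D_\mcX$ and the Euler characteristic drops by precisely $\chi(D_\mcX)$; granting this, Step~2 is a verbatim rerun of the computation in Section~\ref{sec3} and Step~3 is immediate.
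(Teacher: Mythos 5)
Your proposal is correct and follows essentially the same route as the paper: Steps 2 and 3 reproduce verbatim the Section \ref{sec3} computation ($\chi(X_i)=(-1)^n\chi(Y_i)+2\chi(D_{X_i})$, relation (\ref{hypermirr}), then substitution into $\chi(M_\mcX)=\chi(X_1)+\chi(X_2)-2\chi(D_\mcX)$), which is exactly the paper's proof. The only difference is that you justify the smoothing formula of Step 1 via the Clemens collapse, which the paper simply takes as known for a semistable smoothing, so your worry there is unfounded.
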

\begin{proof}
Firstly
\begin{align*}
\chi({X_i}) =& \chi({X_i}^* ) + \chi(D_{X_i} )\\
        =&  \chi({X_i}^*, W_{X_i}^{-1}(t)) + \chi(W_{X_i}^{-1}(t)) +  \chi(D_{X_i} )\\
        =& (-1)^n \chi({Y_i}) +  2 \chi(D_{X_i} )\\
\end{align*}
and similarly we have
\begin{align*}
\chi({Y_i}) =& (-1)^n \chi({X_i}) +  2 \chi(D_{Y_i} ).\\
\end{align*}
By (\ref{hypermirr}),
$$\chi(D_{X_i} ) = \chi(D_{\mcX}) = (-1)^{n-1}\chi(D_{\mcY}) = (-1)^{n-1} \chi(D_{Y_i} ).$$
Hence we have
\begin{align*}
\chi(M_\mcX) &= \chi(X_1) + \chi(X_2) - 2\chi(D_\mcX)\\
             &= (-1)^n \chi({Y_1}) + (-1)^n \chi({Y_2}) +2\chi(D_\mcX) \\
             &= (-1)^n \chi({Y_1}) + (-1)^n \chi({Y_2}) + 2(-1)^{n-1}\chi(D_\mcY)\,\,\,\,\,\,\,\,\, \left(\because \,\,(\ref{hypermirr})\right) \\
              &= (-1)^n \left( \chi({Y_1}) +  \chi({Y_2}) - 2\chi(D_\mcY) \right)\\
              &= (-1)^n \chi(M_\mcY).
\end{align*}
\end{proof}

If we try to construct  $\Xi_{X_\Delta}$, $\Xi_{Y_{\Delta}}$ and $M_{\mcZ_{\Delta}}$ for higher dimensional reflexive polytope $\Delta$, we come up with some difficulties due to singularities as follows.
\begin{enumerate}
\item There may be no smooth  maximal partial projective crepant desingularization  $X(\Sigma_\Delta)$ of $\mbp(\Delta)$ for $\dim \geq 4$.
\item  There may be  no smooth anticanonical section of $X(\Sigma_\Delta)$ for $\dim \geq 5$.
\end{enumerate}
Hence we may not apply the smoothing theorem in \cite{KaNa} for these cases.
However one can build $\Xi_X$ as a double cover of $X$, branched along $D_X \cup D_X'$, where $D_X'$ is another anticanonical section of $X$, disjoint from $D_X$.
So it is still possible to construct $\Xi_{X_\Delta}$, $\Xi_{Y_{\Delta}}$ and these will be some singular Calabi--Yau varieties, which are expected to satisfy the properties similar to those in Theorem \ref{dcmirror}.
In the case of $\mcZ_{\Delta}$, which may not be a normal crossing variety anymore, one needs to generalize the smoothing theorem so that some mild singularities may be allowed.
It seems natural to allow some mild singularities when one considers higher dimensional quasi-Fano manifolds.

\section{`Rigid' quasi-Fano manifolds} \label{sec8}

In this paper, we have considered a special kind of varieties --- quasi-Fano manifolds with anticanonical fibrations and defined notions of mirror pairs of them.
Noting that they are of negative Kodaira dimension and have additional fibration structure, their classifications seem reachable at least for the three-dimensional case with very low or very high $\alpha_X$.

There are Calabi--Yau threefolds that do not have Calabi--Yau threefolds as their mirror partners such as rigid ones ($h^{1,2} = 0$).
Hence it would be worthwhile to ask if quasi-Fano manifolds with anticanonical fibrations  always come as mirror pairs.
For three-dimensional case, if ($X, Y$) is a mirror pair of quasi-Fano threefolds with anticanonical fibrations, then we should have
$\alpha_X + \alpha_Y = 20$.
So if $\alpha_X$ has its maximal value, $20$, then  $\alpha_Y$ needs to be zero, which is impossible because $Y$ has an ample divisor.
Hence if $\alpha_X=20$, $X$ does not have a quasi-Fano threefold with anticanonical fibration as its mirror partner --- in this case, $X$ could be called `rigid' quasi-Fano threefold.
Such examples can be made easily. For example, take an exceptional ($\rk \Pic =20$) quartic $K3$ surface $D$ on $\mbp^3$ that has smooth curves $c_1, c_2, \cdots, c_k$ such that
\begin{itemize}
\item $c_1 + c_2 + \cdots + c_k$ belongs to the linear system $|-K_{\mbp^3}|_D|$,
\item $c_1, c_2 \cdots, c_k$ generate $H^2(D, \mbq)$.
\end{itemize}
An example of such $D$ is the Fermat quartic.
Blow up sequentially $\mbp^3$ along $c_1, c_2, \cdots, c_k$ to get a quasi-Fano threefold $X$.
Then $\alpha_X = 20$ and so $X$ is a `rigid' quasi-Fano threefold which does not have a quasi-Fano threefold with anticanonical fibration as its mirror partner.

In the case of dimension $n>3$, there are also `rigid' quasi-Fano manifolds whose generic anticanonical sections are rigid Calabi--Yau manifolds. Take a rigid Calabi--Yau manifold $D$ of dimension $n-1$ with non-Gorenstein involution $\rho$ on it, where we call an involution $\rho$ non-Gorenstein if $\rho^*(\omega) = - \omega$ for each $\omega \in H^{n-1,0}(D)$.
We assume further that the fixed locus of $\rho$ is a manifold of dimension $n-2$.
Let $X$ be the blow-up of the quotient $(D \times \mbp^1) / (\rho, \tau)$ along the singular locus, where $\tau$ is an involution of $\mbp^1$, fixing two distinct points.
Then $X$ is a quasi-Fano manifold whose anticanonical section $D_X$ is isomorphic to $D$, which is a rigid Calabi--Yau manifold.
Hence in the view of Definition \ref{higherdefqm}, $X$ does not have a quasi-Fano manifold with anticanonical fibration as its mirror partner.
For $n=4$, an easy example of such a Calabi--Yau threefold $D$ is the one that was introduced  by Beauville  in \cite{Be}.

Besides those `rigid' quasi-Fano manifolds, there are `non-rigid' quasi-Fano threefolds that do not have
quasi-Fano threefolds as its mirrors.
Consider a quasi-Fano threefold $X$ such that the lattice
\begin{align}
L^\bot \cap H^2(D_X, \mbz) \label{duke}
\end{align}
does not contain a hyperbolic lattice, where $L = \Pic_X(D_X)$. Then $L$ does not have a $K3$-mirror lattice. So $X$ cannot have a quasi-Fano threefold as its mirror.
Some concrete examples are obtained from non-symplectic involutions  on $K3$ surfaces.
Choose a non-symplectic involution  $\rho$ on a $K3$ surface whose invariant lattice does not a $K3$-mirror lattice, then  the quasi-Fano threefold $V_\rho$ introduced in Section \ref{sec4} has  no  quasi-Fano threefolds as its mirror.
According to the classification in \cite{Ni}, there are 11 families of such involutions and the resulted quasi-Fano threefold $V_\rho$ satisfies
$$11 \le \alpha_{V_\rho} \le 19.$$
In sum, there  are `non-rigid' quasi-Fano threefolds that do not have
quasi-Fano threefolds as its mirrors.

If the lattice in (\ref{duke}) contains a hyperbolic lattice, then  $\Pic_X(D_X)$ has a $K3$-mirror lattice. Hence, an interesting question would be:

\begin{question}
\emph{For a quasi-Fano threefold $X$ with anticanonical fibration such that $\Pic_X(D_X)$ has a $K3$-mirror lattice, is there a quasi-Fano threefold that has $X$ as its mirror partner?}
\end{question}

This work was done during a visit to the University of Nebraska at Lincoln. The author is very grateful to his friend Kyungyong Lee and his family for their warm hospitality.
This work was supported by  Basic Science Research Program
through the National Research Foundation of Korea(NRF) funded by the Ministry of Education (NRF-2017R1D1A2B03029525).

%%%%%%%%%%%%%% reference %%%%%%%%%%%%%%%%%

\end{document}